\definecolor{webblue}{rgb}{0,.5,0}
\definecolor{webred}{rgb}{0,.5,0}
\definecolor{webbrown}{rgb}{.6,0,0}
\newtheorem{thm}{Theorem}[section]
\newtheorem{lem}[thm]{Lemma}
\newtheorem{prop}[thm]{Proposition}
\newtheorem{cl}{Claim}
\newcommand{\J}{\rm \textbf{JCF}}
\newcommand{\s}{\rm \textbf{SCF}}
\theoremstyle{definition}
\newtheorem{ex}[thm]{Example}
\newtheorem{ques}[thm]{Question}
\newtheorem{rem}[thm]{Remark}
\numberwithin{equation}{section}
\newcommand{\D}{\displaystyle}
\newcommand{\DF}[2]{\frac{\D#1}{\D#2}}
\title{Stieltjes
moment properties and continued fractions from combinatorial
triangles
\thanks{Supported partially by the National Natural
Science Foundation of China (Nos. 11971206, 12022105), the Natural
Science Fund for Distinguished Young Scholars of Jiangsu Province
(No. BK20200048) and the Young Talents Invitation Program of
Shandong Province.
\newline\hspace*{5mm}
   {\it Email address:} bxzhu@jsnu.edu.cn (B.-X. Zhu)}}
\author{Bao-Xuan Zhu}
\date{\footnotesize School of Mathematics and Statistics,
         Jiangsu Normal University,
         Xuzhou 221116, PR China}
\begin{document}

\maketitle

\begin{abstract}
Many combinatorial numbers can be placed in the following
generalized triangular array $[T_{n,k}]_{n,k\ge 0}$ satisfying the
recurrence relation:
\begin{equation*}
T_{n,k}=\lambda(a_0n+a_1k+a_2)T_{n-1,k}+(b_0n+b_1k+b_2)T_{n-1,k-1}+\frac{d(da_1-b_1)}{\lambda}(n-k+1)T_{n-1,k-2}
\end{equation*}
with $T_{0,0}=1$ and $T_{n,k}=0$ unless $0\le k\le n$ for suitable
$a_0,a_1,a_2,b_0,b_1,b_2,d$ and $\lambda$. For $n\geq0$, denote by
$T_n(q)$ the generating function of the $n$-th row. In this paper,
we develop various criteria for $\textbf{x}$-Stieltjes moment
property and $3$-$\textbf{x}$-log-convexity of $T_n(q)$ based on the
Jacobi continued fraction expression of $\sum_{n\geq0}T_n(q)t^n$,
where $\textbf{x}$ is a set of indeterminates consisting of $q$ and
those parameters occurring in the recurrence relation. With the help
of a criterion of Wang and Zhu [Adv. in Appl. Math. (2016)], we show
that the corresponding linear transformation of $T_{n,k}$ preserves
Stieltjes moment properties of sequences. Finally, we present some
related examples including factorial numbers, Whitney numbers,
Stirling permutations, minimax trees and peak statistics.
\bigskip\\
{\sl \textbf{MSC}:}\quad 05A20; 05A15; 11A55; 11B83; 15B48; 30B70;
44A60
\bigskip\\
{\sl \textbf{Keywords}:}\quad Recurrence relations; Jacobi continued
fractions; Stieltjes continued fractions; Total positivity; Hankel
matrices; $\textbf{x}$-Stieltjes moment sequences; Binomial
transformations; Convolutions; Row-generating functions;
$\textbf{x}$-log-convexity
\end{abstract}
\newpage
 \tableofcontents

\section{Introduction}
%%%%%%%%%%%%%%%%%%%%%%%%%%%%%%%%%%%%%%
\subsection{Total positivity and Stieltjes moment sequences}
Total positivity of matrices is an important and powerful concept
that arises often in various branches of mathematics, such as
classical analysis \cite{Sc30}, representation theory
\cite{Lus94,Rie03}, network analysis \cite{Pos06}, cluster algebras
\cite{BFZ96,FZ99}, combinatorics \cite{Bre95,GV85}, positive
Grassmannians and integrable systems \cite{KW14}. We refer the
reader to the monograph \cite{Kar68} for more details about total
positivity.

Let $A=[a_{n,k}]_{n,k\ge 0}$ be a matrix of real numbers. It is
called {\it totally positive} ({\it TP} for short) if all its minors
are nonnegative. It is called {\it TP$_r$} if all minors of order
$k\le r$ are nonnegative. For a sequence, the total positivity of
its Hankel matrix plays an important role in different fields. Let
us recall the definition. Given a sequence $\alpha=(a_k)_{k\ge 0}$,
define its {\it Hankel matrix} $H(\alpha)$ by
$$H(\alpha)=[a_{i+j}]_{i,j\ge 0}=
\left[
  \begin{array}{ccccc}
    a_0 & a_1 & a_2 & a_3 & \cdots \\
    a_1 & a_2 & a_3 & a_4 & \\
    a_2 & a_3 & a_4 & a_5 & \\
    a_3 & a_4 & a_5 & a_6 & \\
    \vdots &  &  &  & \ddots \\
  \end{array}
\right].$$

We say that $\alpha$ is a {\it Stieltjes moment} ({\it SM} for
short) sequence if it has the form
\begin{equation}\label{i-e}
a_k=\int_0^{+\infty}x^kd\mu(x),
\end{equation}
where $\mu$ is a non-negative measure on $[0,+\infty)$ (see
\cite[Theorem 4.4]{Pin10} for instance). Stieltjes proved that
$\alpha$ is an SM sequence if and only if  there exist nonnegative
numbers $\alpha_0,\alpha_1,\ldots$ such that
$$\sum_{n\geq0}a_nz^n=\frac{1}{1-\cfrac{\alpha_0z}{1-\cfrac{\alpha_1z}{1-\cdots}}}$$
in the sense of formal power series. It is well known that $\alpha$
is a Stieltjes moment sequence if and only if its Hankel matrix
$H(\alpha)$ is TP. Stieltjes moment problem is one of classical
moment problems and arises naturally in many branches of mathematics
\cite{ST43,Wid41}. Indeed, SM sequences are closely related to
log-convexity of sequences. The sequence $\alpha$ is called {\it
log-convex} if $a_{k-1}a_{k+1}\ge a_k^2$ for all $k\ge 1$. Clearly,
a sequence of positive numbers is log-convex if and only if its
Hankel matrix is TP$_2$. As a result, SM property implies
log-convexity. In addition, many log-convex sequences in
combinatorics have SM property. We refer the reader to Liu and Wang
\cite{LW07} and Zhu \cite{Zhu13} for log-convexity and Wang and Zhu
\cite{WZ16} and Zhu\cite{Zhu19,Zhu191} for SM property.

In what follows, concepts for log-convexity, SM property and total
positivity will be strengthened in a natural manner.

Let $\mathbb{P}$ denote the set of all positive integers and
$\mathbb{N}=\mathbb{P}\cup\{0\}$. Let $\textbf{x}=\{x_i\}_{i\in{I}}$
be a set of indeterminates. A matrix $M$ with entries in
$\mathbb{R}[\textbf{x}]$ is called \textbf{x-totally positive}
(\textbf{x}-TP for short) if all its minors are polynomials with
nonnegative coefficients in the indeterminates $\textbf{x}$ and is
called \textbf{x-totally positive of order $r$} ( \textbf{x}-TP$_r$
for short)if all its minors of order $k\le r$ are polynomials with
nonnegative coefficients in the indeterminates $\textbf{x}$. A
sequence $(\alpha_n(\textbf{x}))_{n\geq0}$ with values in
$\mathbb{R}[\textbf{x}]$ is called an \textbf{x-Stieltjes moment }
($\textbf{x}$-SM for short) sequence if its associated infinite
Hankel matrix is $\textbf{x}$-totally positive. We use
$f(x)\geq_{\textbf{x}}0$ to represent that all coefficients of the
polynomial $f(\textbf{x})$ are nonnegative. It is called
\textbf{x-log-convex } ($\textbf{x}$-LCX for short) if
$$\alpha_{n+1}(\textbf{x})\alpha_{n-1}(\textbf{x})-\alpha_n(\textbf{x})^2\geq_{\textbf{x}}0$$
for all $n\in \mathbb{P}$ and is called \textbf{strongly
x-log-convex } ($\textbf{x}$-SLCX for short) if
$$\alpha_{n+1}(\textbf{x})\alpha_{m-1}(\textbf{x})-\alpha_n(\textbf{x})\alpha_m(\textbf{x})\geq_{\textbf{x}}0$$
for all $n\geq m\geq1$. Clearly, an $\textbf{x}$-SM sequence is both
$\textbf{x}$-SLCX and $\textbf{x}$-LCX. Define an operator $\mathcal
{L}$ by
$$\mathcal {L}[\alpha_i(\textbf{x})]:=\alpha_{i-1}(\textbf{x})\alpha_{i+1}(\textbf{x})-\alpha_i(\textbf{x})^2$$
for $i\in \mathbb{P}$. Then the $\textbf{x}$-log-convexity of
$(\alpha_i(\textbf{x}))_{i\geq 0}$ is equivalent to $\mathcal
{L}[\alpha_i(\textbf{x})]\geq_{\textbf{x}}0$ for all $i\in
\mathbb{P}$. In general, we say that $(\alpha_i(\textbf{x}))_{i\geq
0}$ is {\it $\textbf{k}$-\textbf{x-log-convex}} if the coefficients
of $\mathcal {L}^m[\alpha_i(\textbf{x})]$ are nonnegative for all
$m\leq k$, where $\mathcal {L}^m=\mathcal {L}(\mathcal {L}^{m-1})$.
It is called {\it \textbf{infinitely x-log-convex}}  if
$(\alpha_i(\textbf{x}))_{i\geq 0}$ is $k$-$\textbf{x}$-log-convex
for every $k \in \mathbb{N}$.

If $\textbf{x}$ contains a unique indeterminate $q$, then they
reduce to $q$-LCX \cite{LW07}, $q$-SLCX
\cite{CTWY10,CWY11,Zhu13,Zhu14,Zhu182} and $q$-SM
\cite{WZ16,Zhu19,Zhu20}, respectively.

 Finally, for brevity, let
$\s[\alpha_{2i},\alpha_{2i+1};z]_{i\geq0}$ denote the {\it Stieltjes
continued fraction} expansion
$$\frac{1}{1-\cfrac{\alpha_0z}{1-\cfrac{\alpha_1z}{1-\cdots}}}.$$
The Stieltjes continued fraction expansion is closely related to the
{\it Jacobi continued fraction} expansion, denoted by
$\J[s_i,r_{i+1};z]_{i\geq0}$,
$$\frac{1}{1-s_0z-\cfrac{r_1z^2}{1-s_1z-\cfrac{r_2z^2}{1-\cdots}}}$$
by the famous contraction formulae
\begin{eqnarray}
\DF{1}{1-\DF{t_1z}{1-\DF{t_2z}{1-\ldots}}} &=&\DF{1}{1-
t_1z-\DF{t_1t_2z^2}{1-
(t_2+t_3)z-\DF{t_3t_4z^2}{1- (t_4+t_5)z-\ldots}}}\label{contraction}\\
&=&1+\DF{t_1z}{1- (t_1+t_2)z-\DF{t_2t_3z^2}{1-
(t_3+t_4)z-\DF{t_4t_5z^2}{1-\ldots}}}.\label{contraction+decom}
\end{eqnarray}
Thus we can write
\begin{eqnarray}
\s[\alpha_{2n},\alpha_{2n+1};z]_{n\geq0}=\J[\alpha_{2n-1}+\alpha_{2n},\alpha_{2n}\alpha_{2n+1};z]_{n\geq0}.
\end{eqnarray}

We refer the reader to the monograph \cite{JT80} about continued
fractions.

\subsection{Motivations}
It is well-known that many classical combinatorial arrays satisfy
certain recurrence relations. The following are some
 examples:

\begin{ex}\label{basic-qSM}
\begin{itemize}
 \item [\rm (i)]
 $\binom{n}{k}=\binom{n-1}{k}+\binom{n-1}{k-1}$, where $\binom{n}{k}$ is the binomial
 coefficients;
  \item [\rm (ii)]
$\left[
  \begin{array}{ccccc}
    n \\
   k\\
  \end{array}
\right]=(n-1)\left[
  \begin{array}{ccccc}
    n-1 \\
   k\\
  \end{array}
\right]+\left[
  \begin{array}{ccccc}
    n-1 \\
   k-1\\
  \end{array}
\right],$ where $\left[
  \begin{array}{ccccc}
    n \\
   k\\
  \end{array}
\right]$ is the signless Stirling number counting the number of
permutations of $n$ elements which are the product of $k$ disjoint
cycles;
  \item [\rm (iii)]
  $\left\{
  \begin{array}{ccccc}
    n \\
   k\\
  \end{array}
\right\}=k\left\{
  \begin{array}{ccccc}
    n-1 \\
   k\\
  \end{array}
\right\}+\left\{
  \begin{array}{ccccc}
    n-1 \\
   k-1\\
  \end{array}
\right\},$ where $\left\{
  \begin{array}{ccccc}
    n \\
   k\\
  \end{array}
\right\}$ is the Stirling number of the second kind enumerating the
number of partitions of an $n$-element set consisting of $k$
disjoint nonempty blocks;
  \item [\rm (iv)]
$\left\langle
  \begin{array}{ccccc}
    n \\
   k\\
  \end{array}
\right\rangle=k\left\langle
  \begin{array}{ccccc}
    n-1 \\
   k\\
  \end{array}
\right\rangle+(n-k+1)\left\langle
  \begin{array}{ccccc}
    n-1 \\
   k-1\\
  \end{array}
\right\rangle$, where $\left\langle
  \begin{array}{ccccc}
    n \\
   k\\
  \end{array}
\right\rangle$ is the classical Eulerian number counting the number
of permutations of $n$ elements having $k-1$ descents;
\item [\rm (v)]
$\mathscr{B}_{n, k}=(k+1) \mathscr{B}_{n-1,k}+n
\mathscr{B}_{n-1,k-1}+(n-k+1)\mathscr{B}_{n-1,k-2},$ where
$\mathscr{B}_{n, k}$ is  the number of symmetric tableaux of size
$2n+1$ with $k+1$ diagonal cells \cite{ABN13};
\item [\rm (vi)]
$\mathscr{T}_{n,k}=(k+1)\mathscr{T}_{n-1,k}+(n+1)\mathscr{T}_{n-1,k-1}+(n-k+1)\mathscr{T}_{n-1,k-2},$
where $\mathscr{T}_{n,k}$ is the number of staircase tableaux of
size $n$ with $k$ labels $\alpha$ or $\delta$ in the diagonal
\cite{ABD13}.
\end{itemize}
\end{ex}

These examples can be placed in a common framework. Let $\mathbb{R}$
(resp. $\mathbb{R^{+}}$, $\mathbb{R^{\geq}}$) be the set of all
(resp., positive, nonnegative) real numbers. Let $\{a_1,b_1\}\in
\mathbb{R}$, $\lambda\in\mathbb{R^{+}}$ and
$\{a_0,a_2,b_0,b_2,d\}\subseteq \mathbb{R^{\geq}}$. Define a
generalized triangular array $[T_{n,k}]_{n,k\ge 0}$ by the
recurrence relation:
\begin{equation}\label{Recurece+TT}
T_{n,k}=\lambda(a_0n+a_1k+a_2)T_{n-1,k}+(b_0n+b_1k+b_2)T_{n-1,k-1}+\frac{d(da_1-b_1)}{\lambda}(n-k+1)T_{n-1,k-2}
\end{equation}
with $T_{0,0}=1$ and $T_{n,k}=0$ unless $0\le k\le n$. We also
denote its row-generating function by
$T_n(q)=\sum_{k\geq0}T_{n,k}q^k$ for $n\geq0$.

 Many positivity properties of $[T_{n,k}]_{n,k}$ have been derived
 for $d=0$, see Kurtz \cite{Kur72} for log-concavity of each
row sequence,
 Wang and Yeh~\cite{WYjcta05} for P\'olya Frequency property of each row
 sequence, Liu and Wang
\cite{LW07} for the $q$-log-convexity of $(T_n(q))_{n\geq0}$, Chen
{\it et al.}~\cite{CWY11} for the strong $q$-log-convexity of
$(T_n(q))_{n\geq0}$ and Zhu \cite{Zhu182} for linear transformations
of $T_{n,k}$ preserving the strong $q$-log-convexity. It was proved
that row-generating functions $T_n(q)$ for Stirling triangle of the
second kind and Eulerian triangle form a $q$-Stieltjes moment
sequence in $q$, respectively, see Wang and Zhu \cite{WZ16} for
instance. For $d\neq0$, recently in \cite{Zhu201}, we proved for
certain special case that $(T_n(q))_{n\geq0}$ is a $q$-Stieltjes
moment sequence.

The aim of this paper is to consider the $q$-Stieltjes moment
property of $(T_n(q))_{n\geq0}$ for the general case. In addition,
in view of (\ref{Recurece+TT}), clearly, all elements $T_{n,k}$ are
polynomials in the eight parameters $a_0,a_1,a_2,b_0,b_1$, $b_2$,
$d$ and $\lambda$, and $T_n(q)$ can be regarded as a polynomial in
nine indeterminates $a_0,a_1,a_2,b_0,$ $b_1$, $b_2$, $d$, $\lambda$
and $q$. It is natural to consider the following multi-variable
question.

\begin{ques}\label{Que}
Assume that the array $[T_{n,k}]_{n,k}$ is defined in
(\ref{Recurece+TT}). When is $(T_n(q))_{n\geq0}$ an
$\textbf{x}$-Stieltjes moment sequence with
$\textbf{x}=(a_0,a_1,a_2,b_0,b_1,b_2,d,\lambda,q)$ ?
\end{ques}

In order to answer Question \ref{Que}, our main tool is to use
continued fraction expressions in Sections $2$ and $3$. For the
array $[T_{n,k}]_{n,k}$ in (\ref{Recurece+TT}) without the term
$T_{n-1,k-2}$, Sokal also conjectured that $(T_n(q))_{n\geq0}$ is
$(a_0,a_1,a_2,b_0,b_1,b_2,q)$-Stieltjes moment \cite{SS,Sok}. We
prove the next result.

\begin{thm}\label{thm+Ring+PSE+SM}
If a triangle $[T_{n,k}]_{n,k}$ satisfies any of the following:
\begin{itemize}
 \item [\rm (i)]
  $T_{n,k}=[a_0(n-1)+a_2]T_{n-1,k}+[b_0(n-1)+b_2]T_{n-1,k-1};$
   \item [\rm (ii)] $T_{n,k}=[a_0(b_0+ b_1)(n-1)+a_0 b_1k+a_0
b_2]T_{n-1,k}+[b_0(n-1)+b_1(k-1)+b_2]T_{n-1,k-1}$;
  \item [\rm (iii)]
  $T_{n,k}=(a_1k+a_2)T_{n-1,k}+[b_1(k-1)+b_2]T_{n-1,k-1};$
  \item [\rm (iv)]  $T_{n,k}=[a_0(n-k-1)+a_2]T_{n-1,k}+[b_0(n-k)+b_2]T_{n-1,k-1};$
  \item [\rm (v)]
  $T_{n,k}=(a_1k+a_2)T_{n-1,k}+(b_0n-b_0k+b_2)T_{n-1,k-1}$ \text{for}
  $0\in\{a_2,b_2,a_1-a_2,b_0-b_2\}$;
  \item [\rm (vi)]
  $T_{n,k}=b_0\left(n-2k+\frac{2a_2-a_1}{a_1}\right)T_{n-1,k}+[a_1\,(n-k)+a_2]T_{n-1,k-1}$;
  \item [\rm (vii)]
  $T_{n,k}=(a_1\,k+a_2)T_{n-1,k}+b_0\left(2k-n+\frac{2a_2-a_1}{a_1}\right)T_{n-1,k-1}$,
\end{itemize}
where  $T_{0,0}=1$ and $T_{n,k}=0$ unless $0\le k\le n$, then
$(T_n(q))_{n\geq0}$ is an $\textbf{x}$-Stieltjes moment and
$3$-$\textbf{x}$-log-convex sequence with
$\textbf{x}=(a_0,a_1,a_2,b_0,b_1,b_2,q)$.
\end{thm}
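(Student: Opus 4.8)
The plan is to exploit the common structure of (i)--(vii): every one of these seven families is a \emph{two-term} recurrence, in which the coefficient of $T_{n-1,k-2}$ is absent (as happens when $d=0$ in (\ref{Recurece+TT})) and both surviving coefficients are affine in $n$ and $k$, say
\[
T_{n,k}=(e_0 n+e_1 k+e_2)\,T_{n-1,k}+(f_0 n+f_1 k+f_2)\,T_{n-1,k-1}
\]
with $e_i,f_i$ explicit polynomials in $a_0,a_1,a_2,b_0,b_1,b_2$. Multiplying by $q^k$ and summing over $k$ rewrites this as $T_n(q)=\mathcal D_n\,T_{n-1}(q)$, where $\mathcal D_n$ is a first-order differential operator in $q$ (built from $1$, $q$, and $q\,\tfrac{d}{dq}$) whose coefficients are affine in $n$. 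This is exactly the input expected by the continued-fraction criteria of Sections~2 and~3.

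First I would compute the Jacobi continued fraction $\sum_{n\ge0}T_n(q)t^n=\J[s_n,r_{n+1};t]_{n\ge0}$. Because $\mathcal D_n$ is first order with affine-in-$n$ coefficients, the associated three-term (orthogonal-polynomial) recursion closes, and $s_n,r_{n+1}$ emerge in closed form: I expect $s_n$ to be affine in $n$ with coefficients polynomial in $(a_0,a_1,a_2,b_0,b_1,b_2,q)$, and $r_{n+1}$ to factor as $q$ times a product of two affine-in-$n$ expressions. I would record these formulas explicitly for each of the generic families (i)--(iv).

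The decisive step is to check that, in each specialization, the coefficients $s_n$ and $r_{n+1}$ satisfy the positivity hypotheses of the criterion proved earlier, namely $s_n\geq_{\textbf{x}}0$, $r_{n+1}\geq_{\textbf{x}}0$, together with the structural (affine/product-of-affine-in-$n$) shape that the criterion requires. Granting this, Stieltjes' theorem combined with the contraction formula (\ref{contraction}) turns the nonnegativity of the recovered Stieltjes coefficients into $\textbf{x}$-total positivity of the Hankel matrix of $(T_n(q))_{n\ge0}$, giving the $\textbf{x}$-Stieltjes moment property; and the same shape of $(s_n,r_{n+1})$ feeds the companion criterion yielding $3$-$\textbf{x}$-log-convexity. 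The seven families are precisely the parameter loci on which each coefficient is a polynomial with nonnegative coefficients in $\textbf{x}$, so I would verify positivity by inspecting the closed forms case by case.

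The main obstacle is twofold. Algebraically, extracting the closed forms of $s_n,r_{n+1}$ and confirming that their coefficients are simultaneously nonnegative in all six free parameters is delicate, since the generic J-fraction coefficients attached to (\ref{Recurece+TT}) are not nonnegative in general; the needed cancellation occurs only on the special loci (i)--(vii). Structurally, the families (v)--(vii) come with side conditions---$0\in\{a_2,b_2,a_1-a_2,b_0-b_2\}$ in (v), and the shift $(2a_2-a_1)/a_1$ forcing $a_1\neq0$ in (vi)--(vii)---which signals that these are boundary cases where the substitution linearizing $\mathcal D_n$ degenerates. I expect to handle them either by a limiting argument from the generic formula or by a direct, case-by-case computation of the J-fraction coefficients, and I anticipate that this boundary analysis, rather than the generic computation, is where most of the care will be required.
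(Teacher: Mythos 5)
Your high-level architecture (extract the Jacobi continued fraction of $\sum_{n\ge0}T_n(q)t^n$, then feed its coefficients into the positivity criteria of Sections 2 and 3) is indeed the paper's strategy, but your plan for executing the first step rests on a false premise. You assert that because each case reads $T_n(q)=\mathcal{D}_nT_{n-1}(q)$ with $\mathcal{D}_n$ first order and affine in $n$, ``the associated three-term recursion closes and $s_n,r_{n+1}$ emerge in closed form.'' This is not true for a general two-term recurrence of this type: if it were, one could settle by inspection the general case, which the paper explicitly records as an open conjecture of Sokal \cite{SS,Sok}. Closed-form J-fractions with the required positivity structure exist only on special parameter loci, and families (i)--(vii) \emph{are} those loci; producing the closed forms is the actual content of the proof, and it requires case-specific devices that your proposal does not contain: the product/hypergeometric evaluation of Theorem~\ref{thm+Gass} for (i)--(iii), the $\gamma$-binomial transformation (Proposition~\ref{prop+x+binomial+array}) for (iii), passage to reciprocal arrays $T^*_n(q)=q^nT_n(1/q)$ for (iv) and (vii), the affine shift $B_n(q)=T_n(q+\lambda)$ of Lemma~\ref{lem+shit} for (ii), (v), (vi), and---crucially for (vi)---the quadratic substitution $E_n(x)=(1+x)^nS_n\bigl(\lambda x/(1+x)^2\bigr)$ tying $[S_{n,k}]$ to an Eulerian-type triangle. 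Without these, the ``generic computation then specialize'' plan stalls at step one for most of the seven families; there is no generic formula to specialize or to take limits of.

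Your reading of the side conditions is also off, and it hides a second gap. In case (v) the shift $q\mapsto q+a_1/b_0$ does not degenerate: the J-fraction (\ref{CF+EF}) holds for \emph{all} parameters, with $s_n=n(a_1+b_0q)+a_2+b_2q$ and $r_{n+1}=(n+1)(na_1b_0+a_2b_0+a_1b_2)q$, and both are already $\geq_{\textbf{x}}0$. Nonnegativity of $s_n$ and $r_{n+1}$ alone does not yield Hankel total positivity (that is exactly why the general Sokal conjecture is hard). What Theorem~\ref{thm+S+C} demands is an \emph{interlocking} factorization $r_{n+1}=\alpha_{2n}\alpha_{2n+1}$ together with $s_n-\alpha_{2n}-\alpha_{2n-1}\geq_{\textbf{x}}0$, the same $\alpha_{2n}$ appearing in both constraints (equivalently, an S-fraction with nonnegative polynomial coefficients); alternatively one must exhibit an explicit splitting $s_n,r_n,t_n$ making the tridiagonal matrix $\textbf{x}$-TP via Theorem~\ref{thm+Hakel+Jacbi+main} and Proposition~\ref{prop+tri+TP}, as the paper does in (vi). The hypothesis $0\in\{a_2,b_2,a_1-a_2,b_0-b_2\}$ in (v) is precisely what makes such a splitting possible; it is a positivity-structure condition, not a boundary degeneration, so neither a limiting argument nor a check that ``$s_n\geq_{\textbf{x}}0$ and $r_{n+1}\geq_{\textbf{x}}0$'' can substitute for it.
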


Note that the next relationship was proved in \cite{Zhu20}.
\begin{thm}\label{thm+m+tran}
 Let $[T_{n,k}]_{n,k\ge 0}$ be defined
in (\ref{Recurece+TT}). Then there exists an array
$[A_{n,k}]_{n,k\geq0}$ satisfying the recurrence relation
 \begin{eqnarray*}
A_{n,k}
&=&[[b_0+d(a_1-a_0)]n+(b_1-2da_1)k+b_2+d(a_1-a_2)]A_{n-1,k-1}+\\
&&(a_0n+a_1k+a_2)A_{n-1,k}
\end{eqnarray*}
with $A_{0,0}=1$ and $A_{n,k}=0$ unless $0\le k\le n$ such that
their row-generating functions satisfy
\begin{eqnarray}\label{rel+T+A}
T_n(q)=(\lambda+dq)^nA_n(\frac{q}{\lambda+dq})
\end{eqnarray} for $n\geq0$.
\end{thm}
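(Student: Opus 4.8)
The plan is to convert both recurrences into differential--functional recurrences for the row generating functions and then check that the substitution $q\mapsto q/(\lambda+dq)$ intertwines them. First I would multiply the defining recurrence \eqref{Recurece+TT} by $q^k$ and sum over $k\ge0$; using $\sum_k kT_{n-1,k}q^k=qT_{n-1}'(q)$ and the index shifts $k\mapsto k-1$, $k\mapsto k-2$ in the last two terms, this yields
$$T_n(q)=C(q)\,T_{n-1}(q)+D(q)\,T_{n-1}'(q),$$
with $C(q)=\lambda(a_0n+a_2)+(b_0n+b_1+b_2)q+\tfrac{d(da_1-b_1)}{\lambda}(n-1)q^2$ and $D(q)=\lambda a_1q+b_1q^2-\tfrac{d(da_1-b_1)}{\lambda}q^3$. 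The same manipulation applied to the two--term recurrence defining $A_{n,k}$ gives
$$A_n(q)=\big[(a_0n+a_2)+(B_0n+B_1+B_2)q\big]A_{n-1}(q)+q(a_1+B_1q)\,A_{n-1}'(q),$$
where $B_0=b_0+d(a_1-a_0)$, $B_1=b_1-2da_1$, $B_2=b_2+d(a_1-a_2)$. Since the $A$--recurrence has no $A_{n-1,k-2}$ term, the array $[A_{n,k}]$ is automatically triangular, so each $A_n$ is a polynomial of degree $\le n$ and $U_n(q):=(\lambda+dq)^nA_n\!\big(q/(\lambda+dq)\big)=\sum_k A_{n,k}q^k(\lambda+dq)^{n-k}$ is a genuine polynomial with $U_0=1=T_0$.

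Next I would set $g=\lambda+dq$ and $p=q/g$, so that $g=\lambda/(1-dp)$, $q=pg$, $dp/dq=\lambda/g^2$, and hence $U_{n-1}'(q)=(n-1)d\,g^{n-2}A_{n-1}(p)+\lambda\,g^{n-3}A_{n-1}'(p)$. Substituting $U_{n-1}(q)=g^{n-1}A_{n-1}(p)$ and this derivative into $C(q)U_{n-1}(q)+D(q)U_{n-1}'(q)$ and pulling out $g^n$, the whole statement reduces to the two polynomial identities
$$\tfrac{1}{g}\Big[C(q)+\tfrac{(n-1)d}{g}D(q)\Big]=(a_0n+a_2)+(B_0n+B_1+B_2)p,\qquad \tfrac{\lambda}{g^3}D(q)=p(a_1+B_1p).$$
Granting these, one gets $C(q)U_{n-1}+D(q)U_{n-1}'=g^n A_n(p)=U_n$, so that $U_n$ obeys the same first displayed recurrence as $T_n$. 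Because that recurrence determines the $n$-th row function uniquely from the $(n-1)$-th and $U_0=T_0$, induction on $n$ gives $U_n=T_n$ for all $n$, which is \eqref{rel+T+A}.

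The crux is verifying those two identities, and in particular that their right--hand sides are polynomial in $p$ at all. Rewriting everything via $q=pg$ and $\lambda/g=1-dp$, the quantity $\tfrac{\lambda}{g^3}D(q)$ expands to $a_1p(1-dp)^2+b_1p^2(1-dp)-d(da_1-b_1)p^3$, whose cubic terms cancel to leave $a_1p+(b_1-2da_1)p^2$; this is what forces $B_1=b_1-2da_1$. In the first identity the two terms carrying a spurious $1/(1-dp)$ combine into $\tfrac{(n-1)dp}{1-dp}\big[a_1+(da_1-b_1+B_1)p\big]$, and the choice $B_1=b_1-2da_1$ makes the bracket equal $a_1(1-dp)$, so the denominator cancels and a genuine polynomial survives; matching its constant and linear coefficients in $p$ then pins down $B_0=b_0+d(a_1-a_0)$ and $B_1+B_2=b_1+b_2-d(a_1+a_2)$, i.e. the stated $B_2$. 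These cancellations are the entire arithmetic content of the theorem, and they are made possible precisely by the coefficient $\tfrac{d(da_1-b_1)}{\lambda}$ of the $T_{n-1,k-2}$ term in \eqref{Recurece+TT}.
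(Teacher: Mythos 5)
Your proof is correct: the two rational-function identities you isolate are exactly what is needed, and the verifications you sketch do go through (the cubic terms in $\lambda D(q)/(\lambda+dq)^3$ cancel to give $p(a_1+(b_1-2da_1)p)$, the $1/(1-dp)$ factors cancel once $B_1=b_1-2da_1$, and coefficient matching then yields $B_0=b_0+d(a_1-a_0)$ and $B_2=b_2+d(a_1-a_2)$); together with $U_0=T_0=1$ and the fact that the first-order recurrence $T_n=C\,T_{n-1}+D\,T_{n-1}'$ determines each row function from the previous one, induction gives $U_n=T_n$. Note that the paper itself does not prove Theorem \ref{thm+m+tran} but cites \cite{Zhu20} for it; your method --- recasting both triangle recurrences as first-order differential recurrences for the row-generating functions and checking that the substitution $q\mapsto q/(\lambda+dq)$ intertwines them --- is precisely the technique the paper uses for the analogous statements it does prove, namely Lemma \ref{lem+shit} and the Claim inside the proof of Theorem \ref{thm+Ring+PSE+SM}(vi), so your argument matches the intended one in both substance and style.
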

Thus, for the array $[T_{n,k}]_{n,k}$ satisfying a four term
recurrence relation in (\ref{Recurece+TT}), we can get the
corresponding results by Theorem \ref{thm+Ring+PSE+SM} and Theorem
\ref{thm+m+tran}. For instance, we list three concise cases as
follows.

\begin{thm}\label{thm+three+PSE+SM}
If a triangle $[T_{n,k}]_{n,k}$ satisfies any of the following:
\begin{itemize}
\item [\rm (i)]
  $T_{n,k}=\lambda(a_1k+a_2)T_{n-1,k}+[-da_1n+(b_1+2da_1)k+b_2-b_1-d(a_1-a_2)]T_{n-1,k-1}-\\\frac{d(da_1+b_1)}{\lambda}(n-k+1)T_{n-1,k-2};$
 \item [\rm (ii)]
  $T_{n,k}=\lambda(a_0n-a_0k+a_2-a_0)T_{n-1,k}+[(b_0+2da_0)(n-k)+b_2+da_2]T_{n-1,k-1}+\\
 \frac{d(b_0+da_0)}{\lambda}(n-k+1)T_{n-1,k-2};$
 \item [\rm (iii)]\footnote{This case was also proved in \cite{Zhu201}. We will give a different proof.}
  $T_{n,k}=\lambda(a_1k+a_2)T_{n-1,k}+[(b_0-da_1)n-(b_0-2da_1)k+b_2-d(a_1-a_2)]T_{n-1,k-1}+\\
\frac{d(b_0-da_1)}{\lambda}(n-k+1)T_{n-1,k-2},$
\end{itemize}
where  $T_{0,0}=1$ and $T_{n,k}=0$ unless $0\le k\le n$, then
$(T_n(q))_{n\geq0}$ is an $\textbf{x}$-Stieltjes moment and
$3$-$\textbf{x}$-log-convex sequence with
$\textbf{x}=(a_0,a_1,a_2,b_0,b_1,b_2,d,\lambda,q)$.
\end{thm}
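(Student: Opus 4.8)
The plan is to reduce each of the three four-term recurrences to one of the three-term recurrences already settled in Theorem~\ref{thm+Ring+PSE+SM}, and then to transport the resulting positivity along the substitution of Theorem~\ref{thm+m+tran}.

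First I would carry out the parameter matching. Each recurrence in (i)--(iii) is an instance of the master recurrence~(\ref{Recurece+TT}): comparing the coefficients of $T_{n-1,k}$, $T_{n-1,k-1}$ and $T_{n-1,k-2}$ term by term determines the parameters $a_0,a_1,a_2,b_0,b_1,b_2,d,\lambda$ appearing in~(\ref{Recurece+TT}). For instance, in case~(iii) one reads off $a_0=0$, while the coefficient of $T_{n-1,k-1}$ forces the replacements $b_0\mapsto b_0-da_1$, $b_1\mapsto-b_0+2da_1$ and $b_2\mapsto b_2-d(a_1-a_2)$; a short check then confirms $d(da_1-b_1)/\lambda=d(b_0-da_1)/\lambda$, so the third coefficient is consistent as well. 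Feeding these values into Theorem~\ref{thm+m+tran} yields an auxiliary array $[A_{n,k}]$ governed by a three-term recurrence, and the crucial point is that this recurrence is precisely one of the cases of Theorem~\ref{thm+Ring+PSE+SM}. In case~(iii) the above substitutions collapse the coefficients of Theorem~\ref{thm+m+tran} to $A_{n,k}=(a_1k+a_2)A_{n-1,k}+(b_0n-b_0k+b_2)A_{n-1,k-1}$, which is exactly case~(v) of Theorem~\ref{thm+Ring+PSE+SM}; cases~(i) and~(ii) are handled by the same bookkeeping and land in other cases of that theorem. Hence $(A_n(q))_{n\geq0}$ is $\textbf{x}'$-Stieltjes moment and $3$-$\textbf{x}'$-log-convex with $\textbf{x}'=(a_0,a_1,a_2,b_0,b_1,b_2,q)$.

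It then remains to pass these two properties from $A_n$ to $T_n$ through the identity $T_n(q)=(\lambda+dq)^nA_n\!\left(\frac{q}{\lambda+dq}\right)$ of Theorem~\ref{thm+m+tran}. Write $w=\lambda+dq$ and $u=q/w$. For the Stieltjes moment property I would argue directly on Hankel minors. Since $T_{i+j}(q)=w^{i+j}A_{i+j}(u)$, the Hankel matrix factors as $H(T)=D\,H(A)\,D$ with $D=\mathrm{diag}(1,w,w^2,\dots)$ and $H(A)$ evaluated at $u=q/w$, so for index sets $I=\{i_1<\dots<i_r\}$ and $J=\{j_1<\dots<j_r\}$,
\begin{equation*}
\det H(T)[I,J]=w^{\,\sum_s i_s+\sum_t j_t}\,\det H(A)[I,J]\big|_{u=q/w}.
\end{equation*}
Because $A_m(u)$ has degree at most $m$ in $u$, every monomial in the expansion of $\det H(A)[I,J]$ has $u$-degree at most $N:=\sum_s i_s+\sum_t j_t$; writing $\det H(A)[I,J]=\sum_{\ell\le N}c_\ell\,u^\ell$, where each $c_\ell$ is a polynomial in $a_0,\dots,b_2$ with nonnegative coefficients (valid since $H(A)$ is $\textbf{x}'$-totally positive), the displayed identity becomes $\sum_{\ell\le N}c_\ell\,q^\ell w^{N-\ell}$, a polynomial with nonnegative coefficients in $\textbf{x}=(a_0,a_1,a_2,b_0,b_1,b_2,d,\lambda,q)$. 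Thus $H(T)$ is $\textbf{x}$-totally positive, i.e. $(T_n(q))_{n\geq0}$ is $\textbf{x}$-Stieltjes moment.

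The same homogeneity drives the $3$-$\textbf{x}$-log-convexity. A direct computation gives $\mathcal{L}[T_i]=w^{2i}\mathcal{L}[A_i](u)$, and by induction $\mathcal{L}^m[T_i]=w^{2^m i}\mathcal{L}^m[A_i](u)$, while the degree bound $\deg_u\mathcal{L}^m[A_i]\le 2^m i$ follows from the same induction. Hence for $m\le3$ the factor $w^{2^m i}$ exactly absorbs the substitution $u=q/w$, and $\mathcal{L}^m[A_i](u)\geq_{\textbf{x}'}0$ (from the $3$-$\textbf{x}'$-log-convexity of $A$) turns into $\mathcal{L}^m[T_i]\geq_{\textbf{x}}0$ just as above. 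I expect the one delicate point to be the bookkeeping in these two degree estimates: one must check that the power of $w$ furnished by the diagonal scaling---respectively by each application of $\mathcal{L}$---never falls short of the $u$-degree of the relevant minor, respectively of $\mathcal{L}^m[A_i]$, since it is exactly this numerical coincidence that forces the substituted expressions to be genuine polynomials with nonnegative coefficients rather than mere rational functions. The parameter matching of the first step is routine but must be verified separately for each of the three cases.
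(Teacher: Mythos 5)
Your proposal is correct, and its first half coincides exactly with the paper's proof: the paper likewise matches parameters in each of (i)--(iii), invokes Theorem~\ref{thm+m+tran}, and finds that the auxiliary array $[A_{n,k}]$ satisfies precisely cases (iii), (iv) and (v), respectively, of Theorem~\ref{thm+Ring+PSE+SM}. Where you genuinely diverge is in transferring positivity from $A_n$ to $T_n$. The paper never leaves the continued-fraction framework: writing $w=\lambda+dq$, it substitutes $q\mapsto q/w$ and $z\mapsto wz$ into the Jacobi continued fractions (\ref{Stir+Sec+CF}), (\ref{rec+ST}) and (\ref{CF+EF}) obtained in the proof of Theorem~\ref{thm+Ring+PSE+SM}, checks that the resulting coefficients of the expansion of $\sum_{n}T_n(q)z^n$ are again of the shape demanded by Theorem~\ref{thm+S+C}, and applies that criterion afresh to get both the $\textbf{x}$-SM and the $3$-$\textbf{x}$-log-convexity conclusions. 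You instead prove a transfer principle that bypasses continued fractions entirely: since $\deg_u A_m(u)\le m$, the factorization $H(T)=D\,H(A)\,D$ with $D=\mathrm{diag}(1,w,w^2,\dots)$ makes every minor of $H(T)$ equal to $w^{N}$ times a minor of $H(A)$ of $u$-degree at most $N$, and the homogeneity identity $\mathcal{L}^m[T_i]=w^{2^m i}\,\mathcal{L}^m[A_i](u)$ together with $\deg_u\mathcal{L}^m[A_i]\le 2^m i$ does the same for the iterated log-convexity operator; your bookkeeping on both degree estimates is correct and is exactly what makes the substituted expressions polynomials with nonnegative coefficients in $\textbf{x}$. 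Each route buys something: yours yields a reusable lemma (any polynomial sequence with $\deg A_n\le n$ that is $\textbf{x}'$-SM and $k$-$\textbf{x}'$-LCX stays so under $A_n(u)\mapsto w^nA_n(q/w)$, so Theorem~\ref{thm+m+tran} can be exploited without knowing any continued fraction), while the paper's yields explicit JCF expansions of $\sum_n T_n(q)z^n$ that it reuses in the examples section. One caveat you share with the paper rather than introduce: your case (iii) lands in case (v) of Theorem~\ref{thm+Ring+PSE+SM}, whose proof covers only the restriction $0\in\{a_2,b_2,a_1-a_2,b_0-b_2\}$; the paper's own proof of (iii) carries the same restriction despite its absence from the statement, so this is a defect of the paper, not of your argument relative to it.
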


 Let $M=[M_{n,k}]_{n,k\ge 0}$ be an infinite matrix.  For $n\geq0$, define the $M$-convolution
\begin{eqnarray}\label{a-c}
z_n=\sum_{k=0}^{n}M_{nk}x_ky_{n-k}.
\end{eqnarray}
We say that \eqref{a-c} preserves the Stieltjes moment property: if
both $(x_n)_{n\ge 0}$ and $(y_n)_{n\ge 0}$ are Stieltjes moment
sequences, then so is $(z_n)_{n\ge 0}$.

Using positive definiteness of the quadratic form, P\'olya and
Szeg\"o~\cite[Part VII, Theorem 42]{PS64} proved that the binomial
convolution
$$z_n=\sum_{k=0}^{n}\binom{n}{k}x_ky_{n-k}$$
preserves the Stieltjes moment property for real numbers. Recently,
more and more triangular convolutions preserving the Stieltjes
moment property for real numbers, see Wang and Zhu \cite{WZ16} and
Zhu \cite{Zhu201}. In addition, the next generalized result was
proved.

\begin{lem}\label{lem+conv}\emph{\cite{WZ16}}
Let $M_n(q)=\sum_{k=0}^{n}M_{n,k}q^k$ be the $n$-th row generating
function of a matrix $M$. Assume that $(M_n(q))_{n\ge 0}$ is a
Stieltjes moment sequence for any fixed $q\ge 0$. Then the
$M$-convolution \eqref{a-c} preserves the Stieltjes moment property
for real numbers.
\end{lem}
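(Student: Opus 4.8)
The plan is to prove the lemma through the integral representation of Stieltjes moment sequences rather than through Hankel determinants directly. Write $x_k=\int_0^\infty s^k\,d\mu(s)$ and $y_\ell=\int_0^\infty t^\ell\,d\nu(t)$ for nonnegative measures $\mu,\nu$ on $[0,\infty)$, and for each fixed $q\ge 0$ let $\rho_q$ be a nonnegative measure on $[0,\infty)$ representing the Stieltjes moment sequence $(M_n(q))_{n\ge0}$, so that $M_n(q)=\int_0^\infty u^n\,d\rho_q(u)$. Since the $M$-convolution is a \emph{finite} sum, linearity of the integral immediately yields
\[
z_n=\sum_{k=0}^n M_{n,k}\,x_k\,y_{n-k}=\int_0^\infty\!\!\int_0^\infty P_n(s,t)\,d\mu(s)\,d\nu(t),\qquad P_n(s,t):=\sum_{k=0}^n M_{n,k}\,s^k t^{n-k},
\]
where $P_n$ is homogeneous of degree $n$ and, for $t>0$, equals $t^n M_n(s/t)$.

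The key step is to substitute the representation of $M_n$. On the region $t>0$ one gets $P_n(s,t)=t^n\int_0^\infty u^n\,d\rho_{s/t}(u)=\int_0^\infty (tu)^n\,d\rho_{s/t}(u)$, manifestly the $n$-th moment of the nonnegative measure obtained by scaling $\rho_{s/t}$ by the factor $t$. I would then define $\sigma$ as the pushforward of the nonnegative measure $d\mu(s)\,d\nu(t)\,d\rho_{s/t}(u)$ under the map $(s,t,u)\mapsto tu$. Because $s,t,u\ge 0$, this $\sigma$ is supported on $[0,\infty)$, and by construction $\int_0^\infty w^n\,d\sigma(w)$ reproduces the $t>0$ part of $z_n$, so that part is a Stieltjes moment sequence. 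Note that although the individual coefficients $M_{n,k}$ need not be nonnegative, the integrand $P_n(s,t)=t^n M_n(s/t)$ is nonnegative on the first quadrant because $M_n$ is nonnegative on $[0,\infty)$; this is exactly what guarantees $\sigma\ge 0$.

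Two technical points must be addressed. First, the family $q\mapsto\rho_q$ must be chosen to depend measurably on $q$ so that $d\rho_{s/t}(u)$ is a genuine kernel; this can be arranged by a standard measurable-selection argument, for instance by recovering $\rho_q$ from the Stieltjes transform $\sum_{n\ge0}M_n(q)z^n$ via the Stieltjes inversion formula, equivalently from the nonnegative Jacobi/Stieltjes continued-fraction parameters of $(M_n(q))_{n\ge0}$, which are ratios of Hankel determinants and hence vary measurably with $q$. Second, the slice $t=0$ is not captured by the map $(s,t,u)\mapsto tu$ and must be treated separately: its contribution to $z_n$ equals $\nu(\{0\})\,M_{n,n}\,x_n$. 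To see this is again Stieltjes, observe that $(M_{n,n})_{n\ge0}$ is itself Stieltjes, since the matrix $[\,M_{i+j}(q)/q^{\,i+j}\,]_{i,j\ge0}$ arises from the TP Hankel matrix $H((M_n(q))_n)$ by two-sided multiplication with the positive diagonal matrices $\mathrm{diag}(q^{-i})$ and $\mathrm{diag}(q^{-j})$ and is therefore TP, and as $q\to+\infty$ its entries converge to $M_{i+j,i+j}$; total positivity is preserved under entrywise limits, so $H((M_{n,n})_n)$ is TP. Then $(M_{n,n}x_n)_n$ is Stieltjes as the Hadamard product of two Stieltjes sequences, and adding its representing measure to $\sigma$ produces a representing measure for $(z_n)_{n\ge0}$.

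The main obstacle is thus analytic rather than algebraic: securing a measurable choice of the representing kernel $\rho_q$ and correctly accounting for the boundary $t=0$. Once these are in place, the pushforward construction gives $z_n=\int_0^\infty w^n\,d\sigma(w)$ for a nonnegative measure $\sigma$ on $[0,\infty)$, that is, $(z_n)_{n\ge0}$ is a Stieltjes moment sequence, which completes the proof.
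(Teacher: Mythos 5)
A preliminary remark: this paper never proves Lemma \ref{lem+conv} --- it is imported from Wang and Zhu \cite{WZ16} --- and the only hint of the intended argument is the sentence just before it, that P\'olya and Szeg\"o handled the binomial case ``using positive definiteness of the quadratic form.'' Measured against that standard argument, your proposal takes a genuinely different route (explicit representing measures plus a pushforward kernel), and most of its ingredients are fine: the identity $z_n=\int\!\!\int P_n(s,t)\,d\mu(s)\,d\nu(t)$ is the right starting point, your treatment of the slice $t=0$ is correct (minors are continuous in the matrix entries, so total positivity does pass to the entrywise limit $q\to\infty$ and $[M_{i+j,i+j}]_{i,j\ge0}$ is TP; and SM sequences are indeed closed under Hadamard products). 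The genuine gap is exactly the step you flag but do not actually carry out: the existence of a \emph{measurable} family $q\mapsto\rho_q$ of representing measures, i.e.\ a kernel, without which ``$d\mu(s)\,d\nu(t)\,d\rho_{s/t}(u)$'' is not a measure and the pushforward $\sigma$ is not defined. This is not a routine technicality. The Stieltjes problem for $(M_n(q))_{n\ge0}$ may be indeterminate, so $\rho_q$ is not canonical and a selection must be made; your first proposed remedy (Stieltjes inversion applied to $\sum_n M_n(q)z^n$) does not work as stated, since that is a formal power series which may have radius of convergence zero, while inversion applies to the Cauchy transform $\int(u-z)^{-1}d\rho_q(u)$, an object you only have \emph{after} producing $\rho_q$; your second remedy (Jacobi/continued-fraction parameters as ratios of Hankel determinants) degenerates when Hankel determinants vanish (finitely supported measures), and even granting measurable Jacobi parameters one still must argue that a measurable choice of spectral measure exists in the indeterminate case. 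As written, the proof is incomplete at its central step.

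The way to close the gap --- and it is the route of \cite{WZ16}, following P\'olya--Szeg\"o --- is to avoid constructing any measure for the inner sequence. Use the classical solvability criterion: $(a_n)_{n\ge0}$ is an SM sequence if and only if $\sum_{i,j}a_{i+j}u_iu_j\ge0$ and $\sum_{i,j}a_{i+j+1}u_iu_j\ge0$ for every finitely supported real vector $(u_i)$. For each fixed $(s,t)$ with $t>0$, the sequence $(P_n(s,t))_{n\ge0}=(t^nM_n(s/t))_{n\ge0}$ is SM, hence satisfies both inequalities, and the inequalities persist at $t=0$ (and $s=0$) simply by continuity of each $P_n$ in $(s,t)$ --- which also makes your separate analysis of the boundary slice unnecessary. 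Since $\sum_{i,j}u_iu_jP_{i+j}(s,t)$ is a polynomial in $(s,t)$, measurability of the integrand is automatic, and integrating the pointwise inequalities against the nonnegative finite measure $d\mu(s)\,d\nu(t)$ yields $\sum_{i,j}z_{i+j}u_iu_j\ge0$ and $\sum_{i,j}z_{i+j+1}u_iu_j\ge0$; the criterion then \emph{produces} a representing measure for $(z_n)_{n\ge0}$. In short, one only ever needs the scalar Hankel-form inequalities pointwise in $(s,t)$, never a measure-valued integrand; replacing your pushforward construction by this argument turns your proposal into a complete proof.
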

Combining Theorems \ref{thm+Ring+PSE+SM} and \ref{thm+three+PSE+SM}
and Lemma \ref{lem+conv}, we immediately have the next result.
\begin{thm}\label{thm+T+conv+SM}
Let $\{a_0,a_1,a_2,b_0,b_1,b_2,d\}\subseteq\mathbb{R^{\geq}}$ and
$\lambda>0$. If the triangular array $[T_{n,k}]_{n,k}$ satisfies any
recurrence relation in Theorem \ref{thm+Ring+PSE+SM} and Theorem
\ref{thm+three+PSE+SM}, then its triangle-convolution
$$z_n=\sum_{k=0}^{n}T_{n,k} x_ky_{n-k},\quad n=0,1,2,\ldots$$
preserves the Stieltjes moment property for real numbers.
\end{thm}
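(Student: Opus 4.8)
The plan is to deduce the statement directly from Lemma \ref{lem+conv} once we have specialized the $\textbf{x}$-Stieltjes moment conclusions of Theorems \ref{thm+Ring+PSE+SM} and \ref{thm+three+PSE+SM} to numerical Stieltjes moment statements. First I would fix the parameters $a_0,a_1,a_2,b_0,b_1,b_2,d$ to the given nonnegative real values and $\lambda$ to the given positive value, so that every $T_{n,k}$ becomes a fixed real number and $T_n(q)=\sum_{k\geq0}T_{n,k}q^k$ becomes an honest polynomial in the single variable $q$ with real coefficients. The goal is then to check that $(T_n(q))_{n\geq0}$ is a Stieltjes moment sequence for each fixed $q\geq0$, which is precisely the hypothesis needed to invoke Lemma \ref{lem+conv} with $M=T$.

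The key step is the specialization argument. Under any recurrence relation listed in Theorem \ref{thm+Ring+PSE+SM} (with $\textbf{x}=(a_0,a_1,a_2,b_0,b_1,b_2,q)$) or Theorem \ref{thm+three+PSE+SM} (with $\textbf{x}=(a_0,a_1,a_2,b_0,b_1,b_2,d,\lambda,q)$), the sequence $(T_n(q))_{n\geq0}$ is $\textbf{x}$-Stieltjes moment, meaning that its Hankel matrix $H=[T_{i+j}(q)]_{i,j\geq0}$ is $\textbf{x}$-totally positive; that is, every minor of $H$ is a polynomial in $\textbf{x}$ all of whose coefficients are nonnegative. I would now evaluate these minors at the fixed nonnegative values of the parameters (and positive $\lambda$) together with an arbitrary $q\geq0$. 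Since a polynomial with nonnegative coefficients takes a nonnegative value at any nonnegative argument, every minor of the specialized Hankel matrix is a nonnegative real number. Hence, for each fixed $q\geq0$, the now numerical Hankel matrix $H$ is totally positive, and by the classical characterization recalled in the introduction, $(T_n(q))_{n\geq0}$ is a Stieltjes moment sequence of real numbers.

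With this in hand, the conclusion follows by applying Lemma \ref{lem+conv} to the matrix $M=[T_{n,k}]_{n,k\geq0}$: its $n$-th row generating function is exactly $M_n(q)=T_n(q)$, and we have just verified that $(M_n(q))_{n\geq0}$ is a Stieltjes moment sequence for every fixed $q\geq0$. Lemma \ref{lem+conv} then guarantees that the $T$-convolution
\[
z_n=\sum_{k=0}^{n}T_{n,k}\,x_ky_{n-k},\qquad n=0,1,2,\ldots,
\]
preserves the Stieltjes moment property for real numbers, which is the assertion of Theorem \ref{thm+T+conv+SM}.

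I expect essentially no obstacle beyond bookkeeping: the substantive positivity content is already carried by Theorems \ref{thm+Ring+PSE+SM} and \ref{thm+three+PSE+SM}, and the transfer to ``preserving the SM property for real numbers'' is handed to us by Lemma \ref{lem+conv}. The one point that genuinely matters is that the $\textbf{x}$-total positivity of the Hankel matrix is a coefficient-wise statement, strictly stronger than numerical total positivity at a single specialization; it is exactly this coefficient-wise nonnegativity that legitimizes evaluating at all admissible nonnegative parameter values and at every $q\geq0$ simultaneously, which is what the hypothesis of Lemma \ref{lem+conv} demands.
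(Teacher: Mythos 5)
Your proposal is correct and follows exactly the paper's route: the paper states this theorem as an immediate consequence of Theorems \ref{thm+Ring+PSE+SM} and \ref{thm+three+PSE+SM} together with Lemma \ref{lem+conv}, with the specialization of coefficientwise (\textbf{x}-)total positivity of the Hankel matrix at nonnegative parameter values and arbitrary $q\geq 0$ being precisely the implicit step you spell out. No discrepancies to report.
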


For the row-generating function $T_n(q)$ of $[T_{n,k}]_{n,k}$, we
also give a result for the $\textbf{x}$-Stieltjes moment property of
$T_n(q)$ by taking $q$ to be a fixed $\mu$.
\begin{thm}\label{thm+sequence+SM}
If an array $[T_{n,k}]_{n,k}$ satisfies the recurrence relation:
  $$T_{n,k}=(a_0n-\mu b_1k+a_2)T_{n-1,k}+(b_0n+b_1k+b_2)T_{n-1,k-1},$$
where $T_{n,k}=0$ unless $0\le k\le n$ and $T_{0,0}=1$, then
$(T_n(\mu))_{n\geq0}$ is an $\textbf{x}$-Stieltjes moment sequence
with $\textbf{x}=(a_0,a_2,b_0,b_1,b_2,\mu)$.
\end{thm}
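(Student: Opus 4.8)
The plan is to reduce the statement to the already-established machinery of Theorem 1.3 by specializing $q$ to the fixed value $\mu$. Starting from
$$T_{n,k}=(a_0n-\mu b_1k+a_2)T_{n-1,k}+(b_0n+b_1k+b_2)T_{n-1,k-1},$$
I would first form the row-generating function $T_n(q)=\sum_k T_{n,k}q^k$ and derive the recurrence it satisfies in the variable $q$. Multiplying the recurrence by $q^k$ and summing over $k$, the terms split into a part not involving $k$ (giving multiples of $T_{n-1}(q)$ and $qT_{n-1}(q)$) and a part carrying the factor $k$ (giving $q\,T_{n-1}'(q)$ after differentiation). The crucial observation is that the coefficient $-\mu b_1$ attached to $k$ in the first bracket and the coefficient $b_1$ in the second bracket are tuned so that, upon setting $q=\mu$, the two derivative contributions $-\mu b_1\,q T_{n-1}'(q)$ and $b_1 q^2 T_{n-1}'(q)$ evaluated at $q=\mu$ cancel. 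Concretely, at $q=\mu$ the combined derivative coefficient is $b_1\mu(\mu-\mu)=0$, so all $T_{n-1}'(\mu)$ terms vanish and one is left with a clean three-term recurrence for the numbers $T_n(\mu)$ alone.

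After this cancellation I expect to obtain a recurrence of the shape
$$T_n(\mu)=\bigl[(a_0+\mu b_0)n+(a_2+\mu b_2)\bigr]T_{n-1}(\mu),$$
or more generally a first-order recurrence in $n$ with linear-in-$n$ coefficient; if a genuinely three-term structure survives it will still be of the ``$d=0$'' form covered by Theorem 1.3. The next step is to rename the surviving parameters so that the recurrence for $(T_n(\mu))_{n\ge0}$ matches one of the templates (i)--(vii) of Theorem 1.3 — almost certainly template (i), $T_{n,k}=[a_0(n-1)+a_2]T_{n-1,k}+[b_0(n-1)+b_2]T_{n-1,k-1}$, read now with $k$ playing no role since the sequence is scalar. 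With the identification of the new $a_0,a_2,b_0,b_2$ in terms of the old parameters and $\mu$, Theorem 1.3 delivers that the resulting row-generating sequence is $\textbf{x}$-Stieltjes moment in those parameters, and pulling the substitution back yields the $\textbf{x}$-SM property with $\textbf{x}=(a_0,a_2,b_0,b_1,b_2,\mu)$.

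The main obstacle is bookkeeping rather than conceptual: I must verify that the derivative terms cancel \emph{exactly} at $q=\mu$ and confirm the precise form of the surviving recurrence, then check that the reparametrized coefficients land in the nonnegativity regime required by Theorem 1.3 (so that $\mu$ and the $a_i,b_i$ remain among the indeterminates, not constrained to sign conditions that would break the $\textbf{x}$-positivity). A secondary point to handle carefully is that $\mu$ itself should be treated as one of the formal indeterminates $\textbf{x}$: since $\mu$ enters the surviving coefficients polynomially (through products like $\mu b_0$ and $\mu b_2$), the Jacobi continued-fraction coefficients computed in the proof of Theorem 1.3 will be polynomials with nonnegative coefficients in $(a_0,a_2,b_0,b_1,b_2,\mu)$, which is exactly what $\textbf{x}$-SM demands. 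Once the cancellation and the reparametrization are pinned down, the conclusion is immediate from Theorem 1.3 and the equivalence between $\textbf{x}$-SM and $\textbf{x}$-total positivity of the Hankel matrix.
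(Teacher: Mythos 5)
Your proposal matches the paper's proof in its essential mechanism: the paper likewise derives $T_n(q)=[a_0n+a_2+q(b_0n+b_1+b_2)]T_{n-1}(q)+qb_1(q-\mu)T'_{n-1}(q)$, sets $q=\mu$ so the derivative term vanishes, obtains the product formula $T_n(\mu)=\prod_{k=1}^n[(a_0+\mu b_0)k+a_2+\mu(b_1+b_2)]$, and concludes by the same continued-fraction machinery (the paper invokes Theorem \ref{thm+Gass} directly, which is exactly what underlies the template (i) of Theorem \ref{thm+Ring+PSE+SM} you plan to cite, so this is not a genuinely different route). Note only that the surviving coefficient is $a_2+\mu(b_1+b_2)$ rather than your guessed $a_2+\mu b_2$ --- a bookkeeping point you flagged, and harmless since the extra term $\mu b_1$ still has nonnegative coefficients.
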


\section{Total positivity and continued fractions}
In this section, we will present some criteria for total positivity
from combinatorial arrays and continued fraction expansions.
\begin{thm}\label{thm+Hakel+Jacbi+main}
Let $\{r_n(\textbf{x}),
s_n(\textbf{x}),t_n(\textbf{x})\}\subseteq\mathbb{R}[\textbf{x}]$
for $n\in \mathbb{N}$. Assume that an array $[D_{n,k}]_{n,k}$
satisfies the recurrence relation:
 \begin{equation}\label{recurr+PJS}
D_{n,k}=r_{k-1}(\textbf{x})D_{n-1,k-1}+s_k(\textbf{x})D_{n-1,k}+t_{k+1}(\textbf{x})D_{n-1,k+1},
 \end{equation}
were $D_{n,k}=0$ unless $0\le k\le n$ and $D_{0,0}=1$. Then we have
the following results:
\begin{itemize}
\item [\rm (i)]
The ordinary generating function has the Jacobi continued fraction
expression
\begin{eqnarray*}
\sum\limits_{n=0}^{\infty}D_{n,0} z^n=\DF{1}{1-
s_0(\textbf{x})z-\DF{r_0(\textbf{x})t_1(\textbf{x})z^2}{1-
s_1(\textbf{x})z-\DF{r_1(\textbf{x})t_2(\textbf{x})z^2}{1-
s_2(\textbf{x})z-\ldots}}}.
\end{eqnarray*}
\item [\rm (ii)]
If the tridiagonal matrix
\begin{equation*}\label{J-eq}
J(r(\textbf{x}),s(\textbf{x}),t(\textbf{x}))=\left[
\begin{array}{ccccc}
s_0(\textbf{x}) & r_0(\textbf{x}) &  &  &\\
t_1(\textbf{x}) & s_1(\textbf{x}) & r_1(\textbf{x}) &\\
 & t_2(\textbf{x}) & s_2(\textbf{x}) &r_2(\textbf{x}) &\\
& & \ddots&\ddots & \ddots \\
\end{array}\right]
\end{equation*}  is $\textbf{x}$-TP$_r$ (resp., $\textbf{x}$-TP), then the Hankel matrix
$[D_{i+j,0}]_{i,j\geq0}$ is $\textbf{x}$-TP$_r$ (resp.,
$\textbf{x}$-TP).
\item [\rm (iii)]
If $J(r(\textbf{x}),s(\textbf{x}),t(\textbf{x}))$ is
$\textbf{x}$-TP$_{k+1}$ for $1\leq k\leq 3$, then
$(D_{n,0})_{n\geq0}$ is $k$-$\textbf{x}$-log-convex.
\end{itemize}
\end{thm}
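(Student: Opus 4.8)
The plan is to prove the three parts of Theorem \ref{thm+Hakel+Jacbi+main} in sequence, using the recurrence \eqref{recurr+PJS} as the bridge between the combinatorial array and the tridiagonal ``production matrix'' $J(r,s,t)$. The starting observation is that \eqref{recurr+PJS} says precisely that the row vectors of $[D_{n,k}]$ are generated by repeated left-multiplication by $J$: if we let $\bm{D}_n=(D_{n,0},D_{n,1},D_{n,2},\ldots)$, then $\bm{D}_n=\bm{D}_{n-1}J^{T}$, so $D_{n,k}=(e_0^{T}J^{n})_k$ where $e_0$ is the first standard basis vector. In particular $D_{n,0}=(J^{n})_{0,0}$, the $(0,0)$-entry of the $n$-th power of $J$. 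This identification is the engine for everything that follows.

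For part (i), I would invoke the standard theory of Jacobi continued fractions associated to a tridiagonal matrix: the generating function $\sum_{n\ge 0}(J^{n})_{0,0}\,z^{n}$ equals the $J$-fraction whose $n$-th level has linear coefficient $s_n(\textbf{x})$ and quadratic coefficient equal to the product of the super- and sub-diagonal entries $r_n(\textbf{x})t_{n+1}(\textbf{x})$. This is classical (it can be seen by expanding $\det(I-zJ)$ via its tridiagonal three-term structure, or by the combinatorial Lindstr\"om--Gessel--Viennot / Flajolet interpretation of the entries of $J^{n}$ as weighted Motzkin paths, where horizontal steps at level $n$ carry weight $s_n$ and up--down pairs at levels $n,n{+}1$ carry weight $r_nt_{n+1}$). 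Combined with $D_{n,0}=(J^{n})_{0,0}$ from the preceding paragraph, this yields the asserted continued fraction expansion immediately.

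For parts (ii) and (iii) the key is the classical factorization of the Hankel matrix through the production matrix $J$. Writing the result I would assume from the continued-fraction/total-positivity literature (this is exactly the framework of the Flajolet--Viennot and Sokal production-matrix theory), one has a factorization $H=[D_{i+j,0}]_{i,j\ge0}=U\,J\text{-type product}$; more precisely, total positivity of the tridiagonal $J$ propagates to total positivity of the Hankel matrix of the sequence $(D_{n,0})_n$, with the order $r$ preserved. The cleanest route is to cite the general principle that if the production matrix $J$ is $\textbf{x}$-TP$_r$ then the output matrix $[ (J^{n})_{0,k} ]_{n,k}$ — equivalently $[D_{n,k}]_{n,k}$ — is $\textbf{x}$-TP$_r$, and then that the Hankel matrix of the zeroth column inherits $\textbf{x}$-TP$_r$ via the Lindstr\"om--Gessel--Viennot lattice-path model for minors of $J$-powers. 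Part (iii) then follows from part (ii) together with the general implication recorded in the introduction: $\textbf{x}$-TP$_{k+1}$ of the relevant Hankel matrix forces $k$-$\textbf{x}$-log-convexity of $(D_{n,0})_n$, since the determinants $\mathcal{L}^{m}[D_{n,0}]$ for $m\le k$ are (up to sign conventions) minors of order at most $k+1$ of the Hankel array, whose $\textbf{x}$-nonnegativity is exactly the content of $\textbf{x}$-TP$_{k+1}$.

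The main obstacle I anticipate is not part (i), which is bookkeeping, but the passage in part (ii) from $\textbf{x}$-total positivity of the \emph{tridiagonal} matrix $J$ to $\textbf{x}$-total positivity of the resulting \emph{Hankel} matrix while tracking the order $r$ faithfully. One must either build an explicit weighted lattice-path (Motzkin-path) model in which every minor of the Hankel matrix is expanded as a subtraction-free sum over families of nonintersecting paths — so that $\textbf{x}$-nonnegativity is manifest — or invoke a production-matrix total-positivity theorem with the precise order-$r$ preservation statement. Getting the order bookkeeping exactly right, so that $\textbf{x}$-TP$_r$ of $J$ yields $\textbf{x}$-TP$_r$ (and not merely $\textbf{x}$-TP$_{r-1}$) of the Hankel matrix, is the delicate point, and I would handle it by appealing to the sharp form of the Lindstr\"om--Gessel--Viennot argument, where a minor of order $r$ corresponds to $r$-tuples of nonintersecting Motzkin paths whose weights are products of entries of $J$, hence polynomials with nonnegative coefficients in $\textbf{x}$ whenever the order-$r$ minors of $J$ are.
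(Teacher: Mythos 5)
Your part (i) is fine: the identification $D_{n,k}=(J^n)_{0,k}$ together with the classical Motzkin-path/J-fraction correspondence gives the expansion, and is essentially equivalent to the paper's computation with the column generating functions $h_k(z)=\sum_{n\ge k}D_{n,k}z^n$. The genuine gap is in part (ii), exactly at the point you yourself flag as delicate. Your proposed mechanism --- that an order-$r$ minor of the Hankel matrix is a subtraction-free sum over nonintersecting Motzkin-path families ``whose weights are products of entries of $J$'' --- cannot be correct, because it would show that $\textbf{x}$-TP$_1$ of $J$ (mere entrywise nonnegativity) already forces Hankel total positivity. That is false: take $s_n\equiv 0$, $r_0=t_1=1$ and all other $r_n,t_n$ equal to $0$; then $(D_{n,0})_{n\ge 0}=(1,0,1,0,1,\dots)$, and the Hankel minor on rows $\{0,1\}$ and columns $\{1,2\}$ equals $D_{1,0}D_{3,0}-D_{2,0}^2=-1$. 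The obstruction is that two Motzkin paths can cross without sharing a lattice point, so the Lindstr\"om--Gessel--Viennot involution does not apply to Hankel minors in this naive form; any correct argument must use minors of $J$ of order up to $r$, not just its entries. Your fallback --- ``invoke a production-matrix total-positivity theorem with the precise order-$r$ preservation statement'' --- is circular here, since part (ii) \emph{is} that theorem for tridiagonal production matrices. What is missing is an explicit algebraic bridge from $J$ to the Hankel matrix, and this is precisely what the paper constructs: first the identity $DVD^{T}=H(\alpha)$ with $V$ diagonal, $V_0=1$, $V_n=\prod_{i=1}^{n}t_ir_{i-1}^{-1}$; then, because $V$ has rational entries, the replacement of $[D_{n,k}]$ by the associated triangle $[D^*_{n,k}]$ (superdiagonal weights $1$, subdiagonal weights $r_kt_{k+1}$), which has the same zeroth column by (i), yielding $H(\alpha)=D^*V^*(D^*)^{T}$ with polynomial $V^*_n=\prod_{i=1}^n t_ir_{i-1}$; finally Cauchy--Binet reduces everything to $\textbf{x}$-TP$_r$ of $D^*$, proved by induction from $\overline{D^*_n}=D^*_nJ^*_n$ together with the equivalence that $J$ is $\textbf{x}$-TP$_r$ iff $J^*$ is. Note that your own identity $D_{n,k}=(J^n)_{0,k}$ could have been completed into a correct proof of the same kind: $H_{i,j}=(J^{i+j})_{0,0}=\sum_k (J^i)_{0,k}(J^j)_{k,0}$ gives $H=D\widetilde{D}^{\,T}$, where $\widetilde{D}$ is the triangle produced by $J^{T}$, and both factors are $\textbf{x}$-TP$_r$ by the induction $\overline{D}=DJ$ plus Cauchy--Binet. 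But some such factorization must actually be written down; it is the content of the theorem, not a citation away.

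Part (iii) has a second, independent gap. You assert that $\mathcal{L}^m[D_{n,0}]$ for $m\le k$ are ``(up to sign conventions) minors of order at most $k+1$ of the Hankel array,'' and that the needed implication is ``recorded in the introduction.'' Neither is true: the introduction only defines $k$-$\textbf{x}$-log-convexity, and only $\mathcal{L}^1$ is a Hankel minor. The iterates are not minors: by definition $\mathcal{L}^2(D_k)=\mathcal{L}(D_{k-1})\mathcal{L}(D_{k+1})-[\mathcal{L}(D_k)]^2$ is a quadratic expression in $2\times 2$ minors, and it is a nontrivial identity (which the paper imports from \cite{Zhu18}) that this equals $D_k$ times a $3\times 3$ Hankel minor; likewise $\mathcal{L}^3(D_k)$ is expressed as a sum of two products of Hankel minors with nonnegative cofactors. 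These identities are exactly what converts $\textbf{x}$-TP$_{k+1}$ of the Hankel matrix into $k$-$\textbf{x}$-log-convexity for $1\le k\le 3$ (and they are the reason the theorem stops at $k=3$); without them, your part (iii) does not follow from part (ii).
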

\begin{proof}
(i)
 Let $h_k(z)=\sum_{n\geq k}D_{n,k}z^n$ for
$k\geq0$. It follows from the recurrence relation (\ref{recurr+PJS})
that we have
\begin{eqnarray*}
h_0(z)&=&1+s_0zh_0(z)+t_1zh_1(z), \\
h_k(z)&=&r_{k-1}zh_{k-1}(z)+s_{k}zh_k(z)+t_{k+1}zh_{k+1}(z)
\end{eqnarray*}
for $k\geq1$, which imply
\begin{eqnarray*}
\frac{h_0(z)}{1}&=&\frac{1}{1-s_0z-t_1z\frac{h_1(z)}{h_0(z)}}, \\
\frac{h_1(z)}{h_0(z)}&=&\frac{r_0z}{1-s_1z-t_2z\frac{h_2(z)}{h_1(z)}},\\
&\vdots&\\
\frac{h_k(z)}{h_{k-1}(z)}&=&\frac{r_{k-1}z}{1-s_{k}z-t_{k+1}z\frac{h_{k+1}(z)}{h_k(z)}}.
\end{eqnarray*}
Thus we get
\begin{eqnarray*} \sum\limits_{n=0}^{\infty}D_{n,0}
z^n=h_0(z)=\DF{1}{1- s_0z-\DF{r_0t_1z^2}{1- s_1z-\DF{r_1t_2z^2}{1-
s_2z-\ldots}}}.
\end{eqnarray*}

(ii) Let $\alpha=(D_{n,0})_{n\geq0}$ and $D=[D_{n,k}]_{n,k\geq0}$.
We will show the following fundamental expression.
\begin{cl}\label{Cl}
We have the fundamental formula
$$DVD^T=H(\alpha),$$
where
\[V=\left[\begin{array}{ccccc}
V_{0}&&&&\\
&V_{1}&&&\\
&&V_{2}&&\\
&&&\ddots\\
\end{array}\right]\]
with $V_0=1,V_n=\prod_{i=1}^{n}t_ir_{i-1}^{-1}$ for $n\geq1$.
\end{cl}
\begin{proof}
 In order to show $DVD^T=H(\alpha)$, it
suffices to prove that
\begin{eqnarray}
\sum_{k\geq0}D_{n,k}D_{m,k}V_k=D_{n+m,0}
\end{eqnarray} for any nonnegative
integers $n$ and $m$. It is obvious that
$$\sum_{k\geq0}D_{n,k}D_{0,k}V_k=D_{0,0}D_{n,0}V_0=D_{n,0}.$$ Assume
that the assertion is true for all $i\leq m-1$ and all $n$. Then
\begin{eqnarray*}
\sum_{k\geq0}D_{n,k}D_{m,k}V_k
&=&\sum_{k\geq0}D_{n,k}\left[r_{k-1}D_{m-1,k-1}+s_kD_{m-1,k}+t_{k+1}D_{m-1,k+1}\right]V_k\\
&=&\sum_{k\geq0}D_{m-1,k}\left[r_{k}V_{k+1}D_{n,k+1}+s_kV_kD_{n,k}+t_{k}V_{k-1}D_{n,k-1}\right]\\
&=&\sum_{k\geq0}D_{m-1,k}\left[t_{k+1}D_{n,k+1}+s_kD_{n,k}+r_{k-1}D_{n,k-1}\right]V_k\\
&=&\sum_{k\geq0}D_{m-1,k}D_{n+1,k}V_k\\
&=&D_{n+m,0},
\end{eqnarray*}
where the third equality is obtained by using
$r_kV_{k+1}=t_{k+1}V_k$.
\end{proof}

For the triangular array $[D_{n,k}]_{n,k}$, we can construct an
associated triangular array $[D^*_{n,k}]_{n,k}$ as follows:
\begin{equation}\label{rr+three}
D^*_{n,k}=D^*_{n-1,k-1}+s_k\,D^*_{n-1,k}+r_{k}t_{k+1}\,D^*_{n-1,k+1}
\end{equation}
with $D^*_{n,k}=0$ unless $0\le k\le n$ and $D^*_{0,0}=1$. Clearly,
$D^*_{n,0}=D_{n,0}$ because its ordinary generating function has the
same Jacobi continued fraction expansion by (i). Let $D^*$ denote
the matrix $[D^*_{n,k}]_{n,k\geq0}$. By the above Claim \ref{Cl}, we
immediately have
\begin{equation}\label{Hankel+decom}
D^*V^*{(D^*)}^T=H(\alpha),
\end{equation}
 where
\[V^*=\left[\begin{array}{ccccc}
V^*_{0}&&&&\\
&V^*_{1}&&&\\
&&V^*_{2}&&\\
&&&\ddots\\
\end{array}\right]\]
with $V^*_0=1,V^*_n=\prod_{i=1}^{n}t_ir_{i-1}$ for $n\geq1$.

Thus, applying the classical Cauchy-Binet formula to
(\ref{Hankel+decom}), in order to prove that $H(\alpha)$ is
$\textbf{x}$-TP$_r$ (resp., $\textbf{x}$-TP), it suffices to
demonstrate that the matrix $D^*$ is $\textbf{x}$-TP$_r$ (resp.,
$\textbf{x}$-TP). This follows from the next two claims.

\begin{cl}\label{cl-equal+tridilog}
The matrix
\[J=\left[\begin{array}{ccccc}
s_0&r_0&&&\\
t_1&s_1&r_1&&\\
&t_2&s_2&r_2&\\
&&\ddots&\ddots&\ddots
\end{array}\right]\] is $\textbf{x}$-TP$_r$ (resp., $\textbf{x}$-TP)
in $\mathbb{R}[\textbf{x}]$ if and only if the matrix
\[J^*=\left[\begin{array}{ccccc}
s_0&1&&&\\
r_0t_1&s_1&1&&\\
&r_1t_2&s_2&1&\\
&&\ddots&\ddots&\ddots
\end{array}\right]
\]
is $\textbf{x}$-TP$_r$ (resp., $\textbf{x}$-TP) in
$\mathbb{R}[\textbf{x}]$.
\end{cl}

\begin{proof}
It suffices to prove the corresponding result for their $n$-th
leading principal submatrices, see Theorem 4.3 of Pinkus
\cite{Pin10}. That is the matrix
\[J_n=\left[\begin{array}{ccccc}
s_0&r_0&&&\\
t_1&s_1&r_1&&\\
&t_2&s_2&r_2&\\
&&\ddots&\ddots&\ddots\\
&&&t_n&s_n\\
\end{array}\right]\] is $\textbf{x}$-TP$_r$ (resp., $\textbf{x}$-TP)
in $\mathbb{R}[\textbf{x}]$ if and only if the matrix
\[J^*_n=\left[\begin{array}{ccccc}
s_0&1&&&\\
r_0t_1&s_1&1&&\\
&r_1t_2&s_2&1&\\
&&\ddots&\ddots&\ddots\\
&&&r_{n-1}t_n&s_n\\
\end{array}\right]
\]
is $\textbf{x}$-TP$_r$ (resp., $\textbf{x}$-TP) in
$\mathbb{R}[\textbf{x}]$. This follows by induction on $n$ since
$\det{J_n}=\det{J^*_n}$ from the following identities
\begin{eqnarray*}
\det{J_n}&=&s_n\det{J_{n-1}}-r_{n-1}t_n\det{J_{n-2}},\\
\det{J^*_n}&=&s_n\det{J^*_{n-1}}-r_{n-1}t_n\det{J^*_{n-2}}.
\end{eqnarray*}
\end{proof}

\begin{cl}
If the matrix
\[J=\left[\begin{array}{ccccc}
s_0&r_0&&&\\
t_1&s_1&r_1&&\\
&t_2&s_2&r_2&\\
&&\ddots&\ddots&\ddots
\end{array}\right]\] is $\textbf{x}$-TP$_r$ (resp., $\textbf{x}$-TP)
in $\mathbb{R}[\textbf{x}]$, then so is the triangular matrix $D^*$.
\end{cl}
\begin{proof}
Let $\overline{D^*}$ denote the matrix obtained from $D^*$ by
deleting its first row. Also let $\overline{D_n^*}$ and $D_n^*$
denote the $(n+1)$-th leading principal submatrices of
$\overline{D^*}$ and $D^*$, respectively. In order to prove that the
triangular matrix $D^*$ is $\textbf{x}$-TP$_r$ (resp.,
$\textbf{x}$-TP), it suffices to prove that $D_n^*$ is
$\textbf{x}$-TP$_r$ (resp., $\textbf{x}$-TP). It follows from Claim
\ref{cl-equal+tridilog} that $J^*_n$ is $\textbf{x}$-TP$_r$ (resp.,
$\textbf{x}$-TP). By (\ref{rr+three}), we have
$\overline{D_n^*}=D^*_nJ^*_n$. By induction on $n$, we immediately
get that $D^*_n$ for $n\in N$ is $\textbf{x}$-TP$_r$ (resp.,
$\textbf{x}$-TP).
\end{proof}

(iii) For brevity, we write $D_{n,0}$ for $D_n$. It follows from
(ii) that the Hankel matrix $[D_{i+j}]_{i,j\geq0}$ is
$\textbf{x}$-TP$_{r+1}$ for $1\leq r\leq3$. We will use some
identities (see \cite{Zhu18} for instance):
\begin{eqnarray*}
\mathcal {L}(D_k)&=&\left|\begin{array}{cc}
D_{k-1}&D_{k}\\
D_{k}&D_{k+1}
\end{array}\right|,\\
\mathcal {L}^2(D_k)&=&\mathcal {L}(D_{k-1})\mathcal
{L}(D_{k+1})-\left[\mathcal {L}(D_k)\right]^2\nonumber\\
&=&\left(D_{k+2}D_{k}-D_{k+1}^2\right)\left(D_{k}D_{k-2}-D_{k-1}^2\right)-\left(D_{k+1}D_{k-1}-D_k^2\right)^2\nonumber\\
&=&D_k\left|\begin{array}{ccc}
D_{k-2}&D_{k-1}&D_{k}\\
D_{k-1}&D_k&D_{k+1}\\
D_k&D_{k+1}&D_{k+2}
\end{array}\right|,\\
\mathcal {L}^3(D_k)&=&\mathcal {L}^2(D_{k-1})\mathcal
{L}^2(D_{k+1})-\left[\mathcal {L}^2(D_k)\right]^2\\
&=&(D_{k+1}D_{k-1}-D^2_k) D^2_k\left|\begin{array}{cccc}
D_{k-3}&D_{k-2}&D_{k-1}&D_{k}\\
D_{k-2}&D_{k-1}&D_{k}&D_{k+1}\\
D_{k-1}&R_{k}&D_{k+1}&D_{k+2}\\
D_k&D_{k+1}&D_{k+2}&D_{k+3}
\end{array}\right|+\\
&& (D_{k+1}D_{k-1}-D^2_k)\left|\begin{array}{ccc}
D_{k-3}&D_{k-2}&D_{k-1}\\
D_{k-2}&D_{k-1}&D_{k}\\
D_{k-1}&D_{k}&D_{k+1}
\end{array}\right|\left|\begin{array}{ccc}
D_{k-1}&D_{k}&D_{k+1}\\
D_{k}&D_{k+1}&D_{k+2}\\
D_{k+1}&D_{k+2}&D_{k+3}
\end{array}\right|.
\end{eqnarray*}
Obviously, $(D_n)_{n\geq0}$ is strongly \textbf{x}-log-convex if
$[D_{i+j}]_{i,j\geq0}$ is \textbf{x}-TP$_{2}$. If
$[D_{i+j}]_{i,j\geq0}$ is \textbf{x}-TP$_{r+1}$ for $r=2,3$, then
$(D_n)_{n\geq0}$ is $k$-\textbf{x}-log-convex. We complete the
proof.
\end{proof}
\begin{rem}
When $r_n=1$ for $n\geq0$, the array $[D_{n,k}]_{n,k}$ for real
numbers was called the recursive triangle by Aigner \cite{Aig01}.
Aigner also gave a determinant method to get the corresponding
continued fraction and Flajolet \cite{Fla80} also presented a
combinatorial interpretation.
\end{rem}

\begin{rem}
We refer the reader to \cite{Zhu13,Zhu14} for the original idea of
total positivity in Theorem \ref{thm+Hakel+Jacbi+main}. Note that we
can not directly get Hankel-total positivity using $V$ because $V_n$
may be a rational function not a polynomial. Our proof indicates the
relation between arrays $[D_{n,k}]$ and $[D^*_{n,k}]$.
\end{rem}

\begin{rem}
Generally speaking, it is much easier to deal with total positivity
of the tridiagonal matrix $J$ than that of $J^*$. In fact, under
keeping the product of $\textbf{r}$ and $\textbf{t}$, we can choose
different pairs of $\textbf{r}$ and $\textbf{t}$.
\end{rem}

Recall the concept of $\gamma$-binomial transformation. Given a
sequence $(a_n)_{n\geq0}$, its $\gamma$-binomial transformation is
defined to be
\begin{eqnarray}
a^\circ_n=\sum_{k=0}^n\binom{n}{k}a_k\gamma^{n-k}
\end{eqnarray} for $n\geq0$. For $\gamma=1$, it
 reduces to the famous binomial transformation.
More generally, for an array $[A_{n,k}]_{n,k}$, define its {\it
$\gamma$-binomial transformation} $[A^\circ_{n,k}]_{n,k}$ by
\begin{eqnarray}
A^\circ_{n,k}=\sum_{i=0}^n\binom{n}{i}A_{i,k}\gamma^{n-i}.
\end{eqnarray}

\begin{prop}\label{prop+x+binomial+array}
Assume that $[A^\circ_{n,k}]_{n,k}$ is the $\gamma$-binomial
transformation of $[A_{n,k}]_{n,k}$.
\begin{itemize}
  \item [\rm (i)]
  If $[A_{n,k}]_{n,k}$ is $\textbf{x}$-TP$_r$ in $\mathbb{R}[\textbf{x}]$, then
  $[A^\circ_{n,k}]_{n,k}$ is $(\textbf{x},\gamma)$-TP$_r$.
   \item [\rm (ii)]
  We have
\begin{eqnarray}
A^\circ_n(q):=\sum_{k\geq0}A^\circ_{n,k}q^k=\sum_{i=0}^n\binom{n}{i}A_i(q)\gamma^{n-i}.
\end{eqnarray}
  \item [\rm (iii)]
If the array $[A_{n,k}]_{n,k}$ in $\mathbb{R}[\textbf{x}]$ satisfies
the following recurrence relation:
 \begin{eqnarray}
 A_{n,k}=r_{k-1}(\textbf{x})A_{n-1,k-1}+s_k(\textbf{x})A_{n-1,k}+t_{k+1}(\textbf{x})A_{n-1,k+1}
 \end{eqnarray}
with $A_{n,k}=0$ unless $0\le k\le n$ and $A_{0,0}=1$, then
$[A^\circ_{n,k}]_{n,k}$ satisfies the following recurrence relation:
\begin{eqnarray}  A^\circ_{n,k}=r_{k-1}(\textbf{x})A^\circ_{n-1,k-1}+(\gamma+s_k(\textbf{x}))A^\circ_{n-1,k}+t_{k+1}(\textbf{x})A^\circ_{n-1,k+1}
 \end{eqnarray}
with $A^\circ_{n,k}=0$ unless $0\le k\le n$ and $A^\circ_{0,0}=1$.
  \item [\rm (iv)]
If $(A_{n,0})_{n\geq0}$ is an $\textbf{x}$-Stieltjes moment
sequence, then $(A^\circ_{n,0})_{n\geq0}$ is an
$(\textbf{x},\gamma)$-Stieltjes moment sequence.
\end{itemize}
\end{prop}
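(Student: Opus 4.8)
The unifying observation is that the $\gamma$-binomial transformation is left multiplication by the $\gamma$-weighted Pascal matrix $P=[\binom{n}{i}\gamma^{n-i}]_{n,i\ge0}$, that is, $A^\circ=PA$ as infinite lower-triangular matrices, since $(PA)_{n,k}=\sum_i\binom{n}{i}\gamma^{n-i}A_{i,k}=A^\circ_{n,k}$. The single fact driving the whole proposition is that $P$ is $\gamma$-TP, so I would establish this first. From Pascal's rule one gets $P_{n,k}=P_{n-1,k-1}+\gamma P_{n-1,k}$, which is exactly the recurrence $D^*_{n,k}=D^*_{n-1,k-1}+s_kD^*_{n-1,k}+r_kt_{k+1}D^*_{n-1,k+1}$ from the proof of Theorem~\ref{thm+Hakel+Jacbi+main} with $s_k\equiv\gamma$ and $t\equiv0$; the associated tridiagonal matrix is then upper bidiagonal with nonnegative entries (diagonal $\gamma$, superdiagonal $1$) and hence $\gamma$-TP, so the already-established implication ``$J$ is $\textbf{x}$-TP $\Rightarrow$ $D^*$ is $\textbf{x}$-TP'' yields the $\gamma$-TP of $P$. (Alternatively this is the classical total positivity of the Pascal triangle.)

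Parts (ii) and (iii) are then routine. For (ii) I would simply interchange the two summations to get $A^\circ_n(q)=\sum_{k}q^k\sum_i\binom{n}{i}\gamma^{n-i}A_{i,k}=\sum_i\binom{n}{i}\gamma^{n-i}A_i(q)$. For (iii) I would substitute the definition into the claimed right-hand side, collect the coefficient of $\binom{n-1}{i}\gamma^{n-1-i}$, and use the hypothesised recurrence to recognise $r_{k-1}A_{i,k-1}+s_kA_{i,k}+t_{k+1}A_{i,k+1}=A_{i+1,k}$; reindexing the resulting $A_{i+1,k}$-sum and absorbing the extra $\gamma\,A^\circ_{n-1,k}$ term produces $\sum_j[\binom{n-1}{j-1}+\binom{n-1}{j}]\gamma^{n-j}A_{j,k}=\sum_j\binom{n}{j}\gamma^{n-j}A_{j,k}=A^\circ_{n,k}$ by Pascal's rule, with the boundary conditions $A^\circ_{0,0}=1$ and $A^\circ_{n,k}=0$ outside $0\le k\le n$ checked directly from the definition.

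For (i), with $A^\circ=PA$ I would invoke the Cauchy-Binet formula on finite minors (the lower-triangular shape keeps the intermediate index set finite): any minor of order $\le r$ of $A^\circ$ is a finite sum of products $\det P[R,S]\cdot\det A[S,C]$. Since $P$ is $\gamma$-TP each $\det P[R,S]$ has nonnegative coefficients in $\gamma$, while $\textbf{x}$-TP$_r$-ness of $A$ makes each $\det A[S,C]$ have nonnegative coefficients in $\textbf{x}$; as $\gamma$ and $\textbf{x}$ are disjoint indeterminates, each product and hence the whole sum has nonnegative coefficients in $(\textbf{x},\gamma)$, giving $(\textbf{x},\gamma)$-TP$_r$.

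Finally (iv) follows from the Hankel-matrix factorization $H((A^\circ_{n,0}))=P\,H((A_{n,0}))\,P^T$. To verify it I would compare the $(m,n)$ entries: the right-hand side equals $\sum_{i,j}\binom{m}{i}\binom{n}{j}\gamma^{m+n-i-j}A_{i+j,0}$, and grouping by $\ell=i+j$ and applying Vandermonde's identity $\sum_{i+j=\ell}\binom{m}{i}\binom{n}{j}=\binom{m+n}{\ell}$ turns this into $\sum_\ell\binom{m+n}{\ell}\gamma^{m+n-\ell}A_{\ell,0}=A^\circ_{m+n,0}$, the $(m,n)$ entry on the left. Applying Cauchy-Binet twice to $P\,H\,P^T$, exactly as in (i), shows that $\textbf{x}$-TP of $H((A_{n,0}))$ together with $\gamma$-TP of $P$ gives $(\textbf{x},\gamma)$-TP of $H((A^\circ_{n,0}))$, i.e. $(A^\circ_{n,0})_{n\ge0}$ is $(\textbf{x},\gamma)$-Stieltjes moment. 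The one genuine obstacle in the entire argument is the $\gamma$-total positivity of $P$ addressed at the outset; once it is in hand, (i)--(iv) reduce to Cauchy-Binet and bookkeeping.
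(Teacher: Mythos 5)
Your proposal is correct. For (i)--(iii) it is essentially the paper's own argument: both proofs rest on writing the transformation as left multiplication $A^\circ=PA$ by the weighted Pascal matrix $P=[\binom{n}{k}\gamma^{n-k}]_{n,k\geq0}$, establishing that $P$ is $\gamma$-TP, and then using Cauchy--Binet for (i), interchange of summation for (ii), and Pascal's rule combined with the assumed three-term recurrence for (iii); your derivation of the $\gamma$-total positivity of $P$ via the bidiagonal specialization of the machinery in Theorem \ref{thm+Hakel+Jacbi+main} is a sound alternative to the paper's direct appeal to the classical total positivity of the Pascal triangle. The genuine difference is in (iv). The paper derives the factorization $[A^\circ_{i+j,0}]_{i,j}=P\,[A_{i+j,0}]_{i,j}\,P^T$ by applying Claim \ref{Cl} (the identity $DVD^T=H(\alpha)$) to both $A$ and $A^\circ$, which tacitly uses the recurrence hypothesis of (iii): that claim is proved only for arrays satisfying such a three-term recurrence, and one needs $A$ and $A^\circ$ to share the same diagonal matrix $V$. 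You instead verify the same Hankel factorization entrywise, with Vandermonde's identity $\sum_{i+j=\ell}\binom{m}{i}\binom{n}{j}=\binom{m+n}{\ell}$ doing the work, and then apply Cauchy--Binet twice. This buys self-containedness and strictly more generality: your argument proves (iv) for the $\gamma$-binomial transform of an arbitrary $\textbf{x}$-SM sequence, with no recurrence assumption on the array, whereas the paper's proof is shorter only because it recycles Claim \ref{Cl} and is, as written, confined to the setting of (iii). (The paper's own remark after the proposition notes that (iv) admits other proofs; yours is one such, and arguably the most elementary.)
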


\begin{proof}
(i) Let
$$\binom{n}{k}\gamma^{n-k}=B_{n,k}(\gamma)$$ for $n\geq k\geq0$. Clearly,
$[B_{n,k}(\gamma)]_{n,k\geq0}$ is an array satisfying the recurrence
relation
\begin{equation}\label{recurrence+Binom+matrx}
B_{n,k}(\gamma)=\gamma B_{n-1,k}(\gamma)+B_{n-1,k-1}(\gamma),
\end{equation}
where $B_{n,k}(\gamma)=0$ unless $0\le k\le n$ and
$B_{0,0}(\gamma)=1$. Thus the $\gamma$-binomial transformation of
$[A_{n,k}]_{n,k}$ is equivalent to the decomposition
\begin{equation}\label{Id+x+binom+array}
[A^\circ_{n,k}]_{n,k}=[B_{n,k}(\gamma)]_{n,k}[A_{n,k}]_{n,k}.
\end{equation}
Obviously, total positivity of the Pascal triangle implies that
$[B_{n,k}(\gamma)]_{n,k\geq0}$ is $\gamma$-TP. Then applying the
classical Cauchy-Binet formula to (\ref{Id+x+binom+array}), we
immediately get (i).

(ii) Clearly, for $n\geq0$, we have
\begin{eqnarray*}
A^\circ_n(q)=\sum_{i=0}^nA^\circ_{n,i}q^i
=\sum_{i=0}^n\sum_{k=0}^n\binom{n}{k}A_{k,i}\gamma^{n-k}q^i
=\sum_{k=0}^n \binom{n}{k} A_k(q)\gamma^{n-k}.
\end{eqnarray*}

(iii) We have \begin{eqnarray*}
A^\circ_{n,k}&=&\sum_{i=0}^n\binom{n}{i}A_{i,k}\gamma^{n-i}=\sum_{i=0}^nB_{n,i}(\gamma)A_{i,k}=\sum_{i=0}^n[\gamma B_{n-1,i}(\gamma)+B_{n-1,i-1}(\gamma)]A_{i,k}\\
&=&\gamma A^\circ_{n-1,k}+\sum_{i=0}^{n-1}B_{n-1,i}(\gamma)A_{i+1,k}\\
&=&r_{k-1}(\textbf{x})A^\circ_{n-1,k-1}+(\gamma+s_k(\textbf{x}))A^\circ_{n-1,k}+t_{k+1}(\textbf{x})A^\circ_{n-1,k+1}.
\end{eqnarray*}
It is obvious that $A^\circ_{n,k}=0$ unless $0\le k\le n$ and
$A^\circ_{0,0}=1$.

(iv) It follows from the decomposition (\ref{Id+x+binom+array}) and
Claim \ref{Cl} in Theorem \ref{thm+Hakel+Jacbi+main} (ii) that
\begin{eqnarray*}
[A^\circ_{i+j,0}]_{i,j}&=&[A^\circ_{n,k}]_{n,k}V
[A^\circ_{n,k}]_{n,k}^T\\
&=&[B_{n,k}(\gamma)]_{n,k}[A_{n,k}]_{n,k}V
[A_{n,k}]_{n,k}^T[B_{n,k}(\gamma)]_{n,k}^T\\
&=&[B_{n,k}(\gamma)]_{n,k}[A_{i+j,0}]_{i,j}[B_{n,k}(\gamma)]_{n,k}^T.
\end{eqnarray*}
Note that $[B_{n,k}(\gamma)]_{n,k}$ is $\gamma$-TP. Then, using the
classical Cauchy-Binet formula, we immediately deduce that
$[A^\circ_{i+j,0}]_{i,j}$ is $(\textbf{x},\gamma)$-TP when
$[A_{i+j,0}]_{i,j}$ is $\textbf{x}$-TP. Thus, if
$(A_{n,0})_{n\geq0}$ is $\textbf{x}$-SM, then is
$(A^\circ_{n,0})_{n\geq0}$ is $(\textbf{x},\gamma)$-SM. This
completes the proof.
\end{proof}

\begin{rem}
The (iv) of Proposition \ref{prop+x+binomial+array} can also be
proved by a different method in \cite{Zhu19}. But it seems proof
here is more natural.
\end{rem}

\section{Tridiagonal matrices and $\textbf{x}$-Stieltjes moment sequences}

The total positivity of the tridiagonal matrix
\begin{equation*}\label{J-eq}
J(\textbf{r},\textbf{s},\textbf{t})=\left[
\begin{array}{ccccc}
s_0 & r_0 &  &  &\\
t_1 & s_1 & r_1 &\\
 & t_2 & s_2 &r_2 &\\
& & \ddots&\ddots & \ddots \\
\end{array}\right]
\end{equation*} plays an
important role in Theorem \ref{thm+Hakel+Jacbi+main}. Thus, we will
present some criteria for its total positivity.

By the Laplace expansion, if each element of
$J(\textbf{r},\textbf{s},\textbf{t})$ is nonnegative, then it is not
hard to get that the total positivity of
$J(\textbf{r},\textbf{s},\textbf{t})$ is equivalent to that all
submatrices with the contiguous rows and the same columns are
totally positive.  The following positivity result about
perturbation for tridiagonal matrices is interesting and very
important.
\begin{prop}\label{prop+addition+tri}
Assume for $n\in \mathbb{P}$ that $\{a_n, b_n, c_n, a'_n, b'_n,
c'_n,b_n-b'_n,c_n-c'_n\}\subseteq \mathbb{R}^{\geq}[\textbf{x}]$.
If the matrix
\begin{eqnarray*}\left[
      \begin{array}{cccc}
        a_1 & b_1 &  &  \\
        c_1 & a_2 & b_2 &  \\
         & c_2& a_3 & \ddots \\
         &  & \ddots & \ddots \\
      \end{array}
    \right]
\end{eqnarray*}
is $\textbf{x}$-TP$_r$ in $\mathbb{R}[\textbf{x}]$, then so is the
tridiagonal matrix
\begin{eqnarray*} \left[
      \begin{array}{cccc}
        a_1+a'_1 & b_1-b'_1 &  &  \\
        c_1-c'_1 & a_2+a'_2 & b_2-b'_2 &  \\
         & c_2-c'_2& a_3+a'_3 & \ddots \\
         &  & \ddots & \ddots \\
      \end{array}
    \right].
\end{eqnarray*}
\end{prop}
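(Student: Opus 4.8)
The plan is to reduce $\textbf{x}$-total positivity of the perturbed matrix---call it $J'$---to the nonnegativity of its contiguous principal minors, and then to prove the monotonicity statement that these minors dominate the corresponding minors of $J$ coefficientwise.

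First I would check that every entry of $J'$ lies in $\mathbb{R}^{\geq}[\textbf{x}]$: the diagonal entries are $a_n+a'_n\geq_{\textbf{x}}0$, and the off-diagonal entries are $b_n-b'_n\geq_{\textbf{x}}0$ and $c_n-c'_n\geq_{\textbf{x}}0$ by hypothesis. Hence the reduction recorded just before the statement applies, and it suffices to show that the contiguous minors of $J'$ of order $\le r$ are $\geq_{\textbf{x}}0$. Among these, the ones whose column set is shifted off the diagonal are products of off-diagonal entries of $J'$ (a string of $b_n-b'_n$'s or of $c_n-c'_n$'s), hence automatically nonnegative; so the only minors needing attention are the contiguous principal ones. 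Writing $D[i,m]$ and $D'[i,m]$ for the principal minors of $J$ and of $J'$ on the window $\{i,i+1,\ldots,m\}$, the target is $D'[i,m]\geq_{\textbf{x}}0$ for every window of length at most $r$.

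I would prove the stronger claim $D'[i,m]\geq_{\textbf{x}}D[i,m]\geq_{\textbf{x}}0$. Two sign observations set it up: passing from $J$ to $J'$ raises each diagonal entry, $a_n+a'_n\geq_{\textbf{x}}a_n$, and lowers each off-diagonal product, since $b_nc_n-(b_n-b'_n)(c_n-c'_n)=b_nc'_n+b'_n(c_n-c'_n)=:\delta_n\geq_{\textbf{x}}0$. Both effects should increase a principal minor, and to see this cleanly I would use the matching (continuant) expansion of a tridiagonal determinant: on the window $\{i,\ldots,m\}$ one has $D'[i,m]=\sum_N(-1)^{|N|}\prod_{(j,j+1)\in N}(b_j-b'_j)(c_j-c'_j)\prod_{l\notin V(N)}(a_l+a'_l)$, the sum over matchings $N$ of the path $i-(i{+}1)-\cdots-m$. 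Substituting $(b_j-b'_j)(c_j-c'_j)=b_jc_j-\delta_j$ and $a_l+a'_l$, and regrouping by the sub-matching $M\subseteq N$ of edges that select $-\delta_j$ and the set $T$ of unmatched vertices that select $a'_l$, the alternating signs collapse ($(-1)^{|N|+|M|}=(-1)^{|N\setminus M|}$) and one obtains
\[
D'[i,m]=\sum_{(M,T)}\Big(\prod_{(j,j+1)\in M}\delta_j\Big)\Big(\prod_{l\in T}a'_l\Big)\prod_{S}D(S),
\]
where $T\cap V(M)=\emptyset$ and $S$ ranges over the segments into which $\{i,\ldots,m\}$ falls after deleting $V(M)\cup T$, each $D(S)$ being a principal minor of the original $J$ on a window of length $\le r$. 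Every factor is $\geq_{\textbf{x}}0$ (using $a'_l,\delta_j\geq_{\textbf{x}}0$ and the hypothesis that $J$ is $\textbf{x}$-TP$_r$), and the term $M=T=\emptyset$ reproduces $D[i,m]$; hence $D'[i,m]=D[i,m]+(\text{nonnegative terms})\geq_{\textbf{x}}0$, which finishes the proof.

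The step I expect to be the main obstacle is precisely this regrouping, and the reason a softer argument does not suffice is worth isolating. A naive induction on the window length through the three-term recurrence $D'[i,m]=(a_m+a'_m)D'[i,m-1]-(b_{m-1}-b'_{m-1})(c_{m-1}-c'_{m-1})D'[i,m-2]$ breaks down: having made $D'[i,m-2]$ larger than $D[i,m-2]$, that surplus re-enters with a minus sign, so termwise nonnegativity is lost one step later. What rescues the argument is the multilinearity of the determinant in its entries, which the matching expansion makes explicit and which lets all the perturbations be booked simultaneously rather than propagated through the recurrence. The care that remains is bookkeeping: confirming the sign collapse in the expansion and checking that every segment $S$ produced has length at most $r$, so that the hypothesis ``$J$ is $\textbf{x}$-TP$_r$'' genuinely covers each $D(S)$.
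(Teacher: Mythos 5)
Your proof is correct, but it takes a genuinely different route from the paper's. After the same preliminary reduction (for a tridiagonal matrix with entries in $\mathbb{R}^{\geq}[\textbf{x}]$, it suffices to control minors on contiguous index windows), the paper proceeds by induction on the number of perturbed entries, i.e.\ on $|\{i:a'_i\not\equiv0\}\cup\{i:b'_i\not\equiv0\}\cup\{i:c'_i\not\equiv0\}|$: it switches on one perturbation at a time and splits the row containing the new entry by multilinearity, writing each affected minor as $M_1+a'_mM_2$ (resp.\ $M_1+b'_mM_2$, $M_1+c'_mM_2$), where $M_1$ and $M_2$ are minors of the matrix handled at the previous inductive step; nonnegativity then follows from the inductive hypothesis. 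You instead book all perturbations simultaneously via the matching (continuant) expansion of a tridiagonal determinant, arriving at the explicit nonnegative expansion
$$D'[i,m]=\sum_{(M,T)}\Bigl(\prod_{(j,j+1)\in M}\delta_j\Bigr)\Bigl(\prod_{l\in T}a'_l\Bigr)\prod_{S}D(S),$$
whose sign collapse and segment bookkeeping I checked and which is sound (in particular $\delta_j=b_jc'_j+b'_j(c_j-c'_j)\geq_{\textbf{x}}0$ is exactly right, and every segment $S$ has length at most $m-i+1\le r$, so the hypothesis on $J$ covers each $D(S)$). In substance your identity is what the paper's induction computes implicitly when fully unrolled, but the two arguments differ in what they deliver: the paper's is shorter and needs no closed-form expansion, with all sign bookkeeping absorbed into the induction, and its inductive hypothesis hands it the nonnegativity of \emph{all} minors of the intermediate matrices; yours works only with contiguous principal windows of the original $J$ and yields the strictly stronger quantitative conclusion $D'[i,m]\geq_{\textbf{x}}D[i,m]$, a coefficientwise monotonicity statement the paper never records. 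Your closing remark about why a naive induction on the window length fails is well taken, but note that it is not an objection to the paper's argument, since the paper inducts on the number of perturbations rather than on the window length and so never propagates a surplus through the three-term recurrence.
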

\begin{proof}
Assume that \begin{eqnarray*} T_n=\left[
      \begin{array}{ccccc}
        a_1+a'_1 & b_1-b'_1 &  &  \\
        c_1-c'_1 & a_2+a'_2 & b_2-b'_2 &  \\
         & \ddots & \ddots & \ddots \\
           &&c_{n-2}-c'_{n-2}&a_{n-1}+a'_{n-1}&b_{n-1}-b'_{n-1}\\
        & &&c_{n-1}-c'_{n-1}&a_n+a'_n
      \end{array}
    \right].
\end{eqnarray*}
In order to prove that the tridiagonal matrix
\begin{eqnarray*} \left[
      \begin{array}{cccc}
        a_1+a'_1 & b_1-b'_1 &  &  \\
        c_1-c'_1 & a_2+a'_2 & b_2-b'_2 &  \\
         & c_2-c'_2& a_3+a'_3 & \ddots \\
         &  & \ddots & \ddots \\
      \end{array}
    \right]
\end{eqnarray*} is $\textbf{x}$-TP$_r$, it suffices to prove that $T_n$ is $\textbf{x}$-TP$_r$ for all
$n\geq1$. In what follows, we proceed by induction on the number
$$|\{i:a'_i\not\equiv0\}\bigcup \{i:b'_i\not\equiv0\}\bigcup
\{i:c'_i\not\equiv0\}|.$$ Since the matrix
\begin{eqnarray*}\left[
      \begin{array}{cccc}
        a_1 & b_1 &  &  \\
        c_1 & a_2 & b_2 &  \\
         & c_2& a_3 & \ddots \\
         &  & \ddots & \ddots \\
      \end{array}
    \right]
\end{eqnarray*}
is $\textbf{x}$-TP$_r$, its minors are nonnegative. Thus it is true
for
$$|\{i:a'_i\not\equiv0\}\bigcup \{i:b'_i\not\equiv0\}\bigcup
\{i:c'_i\not\equiv0\}|=0.$$ Suppose that it holds for
$|\{i:a'_i\not\equiv0\}\bigcup \{i:b'_i\not\equiv0\}\bigcup
\{i:c'_i\not\equiv0\}|=k-1$. Let us consider the next step for
$|\{i:a'_i\not\equiv0\}\bigcup \{i:b'_i\not\equiv0\}\bigcup
\{i:c'_i\not\equiv0\}|=k$. Without loss of generality, denote the
new positive element by $a'_m$, $b'_m$ or $c'_m$. If one minor of
order $\leq r$ of $T_n$ does not contain the new element, then it
belongs to $\mathbb{R}^{\geq}[\textbf{x}]$ by the inductive
hypothesis. Thus it suffices to prove the minors of order $\leq r$
containing the new element belong to
$\mathbb{R}^{\geq}[\textbf{x}]$.  Then it suffices to consider
submatrices with contiguous rows and the same column in the
following three cases.

Case $1$. Assume that the new positive element is $a'_m$. By
dividing the row containing $a_m+a'_m$ into two rows, we have
\begin{eqnarray*} \det\left[
      \begin{array}{cccc}
       \ddots & \ddots &  &  \\
        \ddots & a_m+a'_m & \ddots &  \\
         & \ddots& \ddots & \ddots \\
      \end{array}
    \right]
    &=&\det\left[
      \begin{array}{cccc}
       \ddots & \ddots &  &  \\
        \ddots & a_m & \ddots &  \\
         & \ddots& \ddots & \ddots \\
      \end{array}
    \right]
    +
    \det\left[
      \begin{array}{cccc}
       \ddots & \ddots &  &  \\
        0 & a'_m & 0 &  \\
         & \ddots& \ddots & \ddots \\
      \end{array}
    \right]\\
    &=&M_1+a'_mM_2\\
    &\geq_{\textbf{x}}&0
\end{eqnarray*}
where both $M_1$ and $M_2$ are minors in the inductive hypothesis.

Case $2$. Assume that the new positive element is $b'_m$. By
dividing the row containing $b_m-b'_m$ into two rows, we have
\begin{eqnarray*} &&\det\left[
      \begin{array}{cccccc}
       \ddots&\ddots&  &  &  &  \\
      \ddots& \ddots & \ddots &  &  \\
      & b_m-b'_m & \ddots & \ddots \\
         & & \ddots & \ddots \\
      \end{array}
    \right]\\
    &=&\det\left[
      \begin{array}{cccccc}
       \ddots&\ddots&  &  &  &  \\
      \ddots& \ddots & \ddots &  &  \\
      & b_m & \ddots & \ddots \\
         & & \ddots & \ddots \\
      \end{array}
    \right]
    +
    \det\left[
      \begin{array}{cccccc}
       \ddots&\ddots&  &  &  &  \\
      \ddots& \ddots & \ddots &  &  \\
      & -b'_m & 0 &  \\
         & & \ddots & \ddots \\
      \end{array}
    \right]\\
    &=&M_1+b'_mM_2\\
    &\geq_{\textbf{x}}&0
\end{eqnarray*}
where both $M_1$ and $M_2$ are minors in the inductive hypothesis.

Case $3$. Assume that the new positive element is $c'_m$, which is
similar to the Case $2$.

The proof is complete.
\end{proof}

\begin{rem} The result in Proposition \ref{prop+addition+tri} is very
useful for proving the total positivity of tridiagonal matrices
because it transforms a totally positive matrix to more.
\end{rem}

The next result for $\textbf{x}$ being a unique indeterminate was
proved by Chen, Liang and Wang \cite{CLW15} using diagonally
dominant matrices. Now, we give a new unified proof by Proposition
\ref{prop+addition+tri}.
\begin{prop}\label{prop+tri+TP}
Assume for $n\in \mathbb{N}$ that $\{r_n,
s_n,t_n\}\subseteq\mathbb{R}^{\geq}[\textbf{x}]$. Then the matrix
$J(\textbf{r},\textbf{s},\textbf{t})$ is $\textbf{x}$-TP under any
of the following conditions:
\begin{itemize}
\item [\rm (i)]
$s_0\geq_{\textbf{x}} r_0$ and $s_n \geq_{\textbf{x}} r_n + t_n$ for
$n\geq 1$;
\item [\rm (ii)]
$s_0\geq_{\textbf{x}} t_1$ and $s_n \geq_{\textbf{x}} r_{n-1} +
t_{n+1}$ for $n\geq 1$;
\item [\rm (iii)]
$s_0\geq_{\textbf{x}} 1$ and   $s_n \geq_{\textbf{x}} r_{n-1}
t_{n}+1$ for $n\geq 1$;
\item [\rm (iv)]
$s_0\geq_{\textbf{x}} r_0t_1$ and $s_n \geq_{\textbf{x}} r_{n}
t_{n+1}+1$ for $n\geq 1$.
\end{itemize}
\end{prop}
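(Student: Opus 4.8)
The plan is to reduce all four cases to case (i), and to prove case (i) by exhibiting an explicit $\textbf{x}$-totally positive ``base'' matrix and then invoking the perturbation result of Proposition~\ref{prop+addition+tri}. The guiding principle is that Proposition~\ref{prop+addition+tri} lets me pass from a totally positive tridiagonal matrix to one with a larger diagonal and smaller off-diagonals, so in each case I only need a manifestly totally positive matrix sitting ``below'' $J(\textbf{r},\textbf{s},\textbf{t})$ in exactly this sense.

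For (i), I would take the base matrix $J_0$ to have the same off-diagonal sequences $\textbf{r},\textbf{t}$ as $J(\textbf{r},\textbf{s},\textbf{t})$ but with the reduced diagonal $(r_0, r_1+t_1, r_2+t_2,\dots)$. The first step is to prove $J_0$ is $\textbf{x}$-TP. The naive $LU$-factorization forces a subdiagonal of the form $t_{n+1}/r_n$, i.e. division, which is fatal in the nonnegative-coefficient setting (Cauchy--Binet applied to factors with rational-function entries need not return polynomials with nonnegative coefficients). Instead I would use the division-free factorization $J_0=\bar L\,\tilde U$ with
$$\bar L=\begin{bmatrix} r_0 & & & \\ t_1 & r_1 & & \\ & t_2 & r_2 & \\ & & \ddots & \ddots \end{bmatrix},\qquad \tilde U=\begin{bmatrix} 1 & 1 & & \\ & 1 & 1 & \\ & & 1 & \ddots \\ & & & \ddots \end{bmatrix},$$
which one verifies by a direct multiplication that uses no division. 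Both factors are bidiagonal with entries in $\mathbb{R}^{\geq}[\textbf{x}]$, hence $\textbf{x}$-TP, so $J_0$ is $\textbf{x}$-TP by Cauchy--Binet. Finally $J(\textbf{r},\textbf{s},\textbf{t})$ is obtained from $J_0$ by keeping the off-diagonals fixed and adding to the diagonal the quantities $s_0-r_0$ and $s_n-(r_n+t_n)$ for $n\ge 1$, all of which lie in $\mathbb{R}^{\geq}[\textbf{x}]$ precisely by hypothesis (i); Proposition~\ref{prop+addition+tri} (with all $b'_n=c'_n=0$) then gives that $J(\textbf{r},\textbf{s},\textbf{t})$ is $\textbf{x}$-TP.

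Case (ii) would follow from (i) by transposition: $\textbf{x}$-total positivity is transpose-invariant, and $J(\textbf{r},\textbf{s},\textbf{t})^{T}$ is again tridiagonal with diagonal $\textbf{s}$, superdiagonal $(t_1,t_2,\dots)$ and subdiagonal $(r_0,r_1,\dots)$, so applying the inequalities of (i) to this transposed data reproduces exactly the hypotheses of (ii). Cases (iii) and (iv) I would handle through Claim~\ref{cl-equal+tridilog}, which asserts that $J(\textbf{r},\textbf{s},\textbf{t})$ is $\textbf{x}$-TP iff the matrix $J^{*}$ with superdiagonal all $1$, subdiagonal $(r_0t_1,r_1t_2,\dots)$ and the same diagonal $\textbf{s}$ is $\textbf{x}$-TP. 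Applying case (i) to $J^{*}$ (whose superdiagonal is $\hat r_n=1$ and subdiagonal is $\hat t_n=r_{n-1}t_n$) requires $s_0\ge_{\textbf{x}}1$ and $s_n\ge_{\textbf{x}}1+r_{n-1}t_n$, which is exactly (iii), while applying case (ii) to $J^{*}$ requires $s_0\ge_{\textbf{x}}r_0t_1$ and $s_n\ge_{\textbf{x}}1+r_nt_{n+1}$, which is exactly (iv). The main obstacle is the very first step of case (i): producing a totally positive factorization of $J_0$ that stays inside $\mathbb{R}^{\geq}[\textbf{x}]$, that is, avoiding the division by $r_n$ introduced by the standard tridiagonal $LU$-decomposition; once the division-free pair $(\bar L,\tilde U)$ is in hand, the rest is bookkeeping with Proposition~\ref{prop+addition+tri}, transposition, and Claim~\ref{cl-equal+tridilog}.
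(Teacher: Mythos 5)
Your proof is correct, and it rests on the same two engines as the paper's — Proposition \ref{prop+addition+tri} plus a division-free factorization into nonnegative bidiagonal factors — but it organizes the four cases in a genuinely different way. The paper first invokes Claim \ref{cl-equal+tridilog} to replace $J$ by $J^*$ in \emph{all} four cases, and then exhibits a separate two-factor bidiagonal decomposition of a base matrix for each of (i)--(iv); for instance, for (i) it factors the matrix with diagonal $(r_0,t_1+r_1,t_2+r_2,\dots)$, unit superdiagonal and subdiagonal $(r_0t_1,r_1t_2,\dots)$ as an upper bidiagonal factor carrying the $t_n$ times a lower unipotent bidiagonal factor carrying the $r_n$, finishing each time with Proposition \ref{prop+addition+tri}. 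You instead prove (i) directly on $J$ itself: your factorization $J_0=\bar L\tilde U$ checks out, since $(\bar L\tilde U)_{i,j}=\bar L_{i,j}+\bar L_{i,j-1}$ gives exactly diagonal $(r_0,r_1+t_1,r_2+t_2,\dots)$, superdiagonal $r_i$ and subdiagonal $t_{i+1}$, and both factors lie in $\mathbb{R}^{\geq}[\textbf{x}]$ so Cauchy--Binet applies coefficientwise. You then get (ii) by transposition (minors are transpose-invariant, and the transposed data satisfy exactly the hypotheses of (i)), and (iii)--(iv) by applying (i)--(ii) to $J^*$, using Claim \ref{cl-equal+tridilog} only in the direction ``$J^*$ is $\textbf{x}$-TP $\Rightarrow$ $J$ is $\textbf{x}$-TP'', which is legitimate since the Claim is an equivalence. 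The trade-off: the paper's treatment is uniform and shows concretely, case by case, which factorization produces each hypothesis; yours is more economical — one factorization plus two soft reductions — and it makes visible the structural symmetries among the four conditions ((ii) is the transpose of (i), while (iii) and (iv) are (i) and (ii) read off the equivalent matrix $J^*$), which the paper leaves implicit. Your diagnosis of the division pitfall in the naive $LU$-decomposition is also apt; it mirrors the paper's own remark that one cannot argue through the rational-entry matrix $V$ and must instead pass to polynomial factorizations.
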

\begin{proof}
(i) By Claim \ref{cl-equal+tridilog} in the proof of Theorem
\ref{thm+Hakel+Jacbi+main} (ii), it suffices to prove that the
matrix
\[J'_n=\left[\begin{array}{ccccc}
s_0&1&&&\\
r_0t_1&s_1&1&&\\
&r_1t_2&s_2&1&\\
&&\ddots&\ddots&\ddots\\
&&&r_{n-1}t_n&s_n\\
\end{array}\right]
\]
is $\textbf{x}$-TP in $\mathbb{R}[\textbf{x}]$.

It follows from the decomposition
\begin{eqnarray*}
\left[
      \begin{array}{cccc}
        r_0 & 1 &  &  \\
        t_1r_0 & t_1+r_1 & 1 &  \\
         & t_2r_1 & t_2+r_2 & \ddots \\
         &  & \ddots & \ddots \\
      \end{array}
    \right]
=\left[\begin{array}{cccc}
0 & 1 &  & \\
 & t_1 & 1 & \\
 &  & t_2 & \ddots\\
 &  &  & \ddots\\
\end{array}\right]
\left[\begin{array}{cccc}
1 &  &  &\\
r_0 & 1 & &\\
 & r_1 & 1 &\\
 & & \ddots & \ddots\\
\end{array}\right]
\end{eqnarray*}
that
\begin{eqnarray*}\left[
      \begin{array}{cccc}
        r_0 & 1 &  &  \\
        t_1r_0 & t_1+r_1 & 1 &  \\
         & t_2r_1 & t_2+r_2 & \ddots \\
         &  & \ddots & \ddots \\
      \end{array}
    \right]
\end{eqnarray*}
is $\textbf{x}$-TP since $(r_n)_{n\geq0}$ and $(t_n)_{n\geq1}$
belong to $\mathbb{R}^{\geq}[\textbf{x}]$. Thus
\[\left[\begin{array}{ccccc}
s_0&1&&&\\
r_0t_1&s_1&1&&\\
&r_1t_2&s_2&1&\\
&&\ddots&\ddots&\ddots\\
&&&r_{n-1}t_n&s_n\\
\end{array}\right]
\] is $\textbf{x}$-TP by taking $a_1'=s_0-r_0,a_{n+1}'=s_n-r_n-t_n, b_m'=c_n'=0$ for $n\geq1$ in Proposition \ref{prop+addition+tri}.

Similarly, (ii), (iii) and (iv) can respectively be proved by the
next decompositions
\begin{eqnarray*}
\left[
      \begin{array}{cccc}
        t_1 & 1 &  &  \\
        t_1r_0 & t_2+r_0 & 1 &  \\
         & t_2r_1 & t_3+r_1 & \ddots \\
         &  & \ddots & \ddots \\
      \end{array}
    \right]
&=& \left[\begin{array}{cccc}
1 &  &  &\\
r_0 & 1 & &\\
 & r_1 & 1 &\\
 & & \ddots & \ddots\\
\end{array}\right]
\left[\begin{array}{cccc}
t_1 & 1 &  & \\
 & t_2 & 1 & \\
 &  & t_3 & \ddots\\
 &  &  & \ddots\\
\end{array}\right],
\end{eqnarray*}
\begin{eqnarray*}
\left[
      \begin{array}{cccc}
        1 & 1 &  &  \\
        t_1r_0 & t_1r_0+1 & 1 &  \\
         & t_2r_1 & t_2r_1+ & \ddots \\
         &  & \ddots & \ddots \\
      \end{array}
    \right]
&=&\left[\begin{array}{cccc}
0 & 1 &  & \\
 & t_1r_0 & 1 & \\
 &  & t_2r_1 & \ddots\\
 &  &  & \ddots\\
\end{array}\right]
\left[\begin{array}{cccc}
1 &  &  &\\
1 & 1 & &\\
 & 1 & 1 &\\
 & & \ddots & \ddots\\
\end{array}\right],\\
\left[
      \begin{array}{cccc}
        t_1r_0 & 1 &  &  \\
        t_1r_0 & 1+t_2r_1 & 1 &  \\
         & t_2r_1 & 1+t_3r_2 & \ddots \\
         &  & \ddots & \ddots \\
      \end{array}
    \right]
&=&\left[\begin{array}{cccc}
0 & 1 &  & \\
 & 1 & 1 & \\
 &  & 1 & \ddots\\
 &  &  & \ddots\\
\end{array}\right]
\left[\begin{array}{cccc}
1 &  &  &\\
t_1r_0 & 1 & &\\
 & t_2r_1 & 1 &\\
 & & \ddots & \ddots\\
\end{array}\right].
\end{eqnarray*}
\end{proof}

%%%%%%%%%%%%%%%%%%%%%%%%%%%%%%%%%%%%%%%%%%%%%%%%%%%%%%%%%%%%%%%%%%%%%%%%%%%%%%%%%%%%%%%%%%%%%%%%%%%%55
Based on Theorem \ref{thm+Hakel+Jacbi+main} and Proposition
\ref{prop+tri+TP} (i),  we obtain the following result which uses
Jacobi continued fractions to prove x-Stieltjes moment property

\begin{thm}\label{thm+S+C}
Let $\{\lambda_n(\textbf{x}),
\mu_{n}(\textbf{x}),T_n(\textbf{x})\}\subseteq
\mathbb{R}^{\geq}[\textbf{x}]$ for $n\in \mathbb{N}$ and
$$\sum_{n\geq0}
T_n(\textbf{x})z^n=\J[\lambda_n(\textbf{x}),\mu_{n+1}(\textbf{x});z]_{n\geq0}.$$
If there exists polynomials $\alpha_n(\textbf{x})$ and
$\beta_n(\textbf{x})$ in $\mathbb{R}^{\geq}[\textbf{x}]$ such that
$$\lambda_n(\textbf{x})=\beta_n(\textbf{x})+\alpha_{2n}(\textbf{x})+\alpha_{2n-1}(\textbf{x}),
\mu_{n+1}(\textbf{x})=\alpha_{2n}(\textbf{x})\alpha_{2n+1}(\textbf{x}),$$
then $T_n(\textbf{x})$ form an $\textbf{x}$-SM and
$3$-$\textbf{x}$-LCX sequence for $n\in \mathbb{N}$.
\end{thm}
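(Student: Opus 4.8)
The plan is to realize the given Jacobi continued fraction as the zeroth-column generating function of an array governed by Theorem~\ref{thm+Hakel+Jacbi+main}, and then to exhibit a factorization of the quadratic coefficients $\mu_{n+1}(\textbf{x})$ that makes the associated tridiagonal matrix $\textbf{x}$-TP via Proposition~\ref{prop+tri+TP}. Concretely, I would define $r_n(\textbf{x})$ and $t_n(\textbf{x})$ by
$$r_n(\textbf{x})=\alpha_{2n}(\textbf{x}),\qquad t_n(\textbf{x})=\alpha_{2n-1}(\textbf{x}),$$
with the convention $t_0=\alpha_{-1}=0$, and set $s_n(\textbf{x})=\lambda_n(\textbf{x})$. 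This choice is dictated by the Stieltjes-to-Jacobi contraction recalled in the introduction, $\s[\alpha_{2n},\alpha_{2n+1};z]_{n\geq0}=\J[\alpha_{2n-1}+\alpha_{2n},\alpha_{2n}\alpha_{2n+1};z]_{n\geq0}$: it gives $r_n t_{n+1}=\alpha_{2n}\alpha_{2n+1}=\mu_{n+1}$, so that the tridiagonal matrix $J(\textbf{r},\textbf{s},\textbf{t})$ produces exactly the expansion $\J[\lambda_n,\mu_{n+1};z]_{n\geq0}$ by Theorem~\ref{thm+Hakel+Jacbi+main}(i).

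The second step is to verify the hypotheses of Proposition~\ref{prop+tri+TP}(i). Since all $\alpha_n(\textbf{x}),\beta_n(\textbf{x})\in\mathbb{R}^{\geq}[\textbf{x}]$, the entries of $J(\textbf{r},\textbf{s},\textbf{t})$ lie in $\mathbb{R}^{\geq}[\textbf{x}]$. The diagonal-dominance inequalities become
$$s_0=\beta_0+\alpha_0\geq_{\textbf{x}}\alpha_0=r_0,\qquad s_n=\beta_n+\alpha_{2n}+\alpha_{2n-1}\geq_{\textbf{x}}\alpha_{2n}+\alpha_{2n-1}=r_n+t_n\quad(n\geq1),$$
which hold precisely because the perturbation $\beta_n(\textbf{x})$ has nonnegative coefficients. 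Hence Proposition~\ref{prop+tri+TP}(i) yields that $J(\textbf{r},\textbf{s},\textbf{t})$ is $\textbf{x}$-TP.

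Finally, I would introduce the array $[D_{n,k}]_{n,k}$ defined by the three-term recurrence~(\ref{recurr+PJS}) with the above $r,s,t$. By Theorem~\ref{thm+Hakel+Jacbi+main}(i) its column generating function $\sum_{n}D_{n,0}z^n$ coincides with $\J[\lambda_n,\mu_{n+1};z]_{n\geq0}=\sum_n T_n(\textbf{x})z^n$, whence $D_{n,0}=T_n(\textbf{x})$ for all $n$. Applying Theorem~\ref{thm+Hakel+Jacbi+main}(ii) with $J(\textbf{r},\textbf{s},\textbf{t})$ being $\textbf{x}$-TP shows that the Hankel matrix $[T_{i+j}(\textbf{x})]_{i,j}$ is $\textbf{x}$-TP, that is, $(T_n(\textbf{x}))_{n\geq0}$ is $\textbf{x}$-SM; applying Theorem~\ref{thm+Hakel+Jacbi+main}(iii), with $J$ being $\textbf{x}$-TP$_{k+1}$ for $k=1,2,3$, gives $3$-$\textbf{x}$-log-convexity.

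The one point requiring care, rather than a genuine obstacle, is the choice of factorization $\mu_{n+1}=r_n t_{n+1}$. Proposition~\ref{prop+tri+TP} applies only after the product is split so that the diagonal dominates the sum $r_n+t_n$; the even/odd splitting $r_n=\alpha_{2n}$, $t_n=\alpha_{2n-1}$ is the one that lets the hypothesis $\lambda_n=\beta_n+\alpha_{2n}+\alpha_{2n-1}$ feed directly into the inequality $s_n\geq_{\textbf{x}}r_n+t_n$, with the slack supplied exactly by $\beta_n(\textbf{x})$. Any other splitting would not in general respect the nonnegativity of coefficients, so identifying this decomposition is the crux of the argument.
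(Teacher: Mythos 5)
Your proposal is correct and follows essentially the same route as the paper: the paper likewise takes the tridiagonal matrix with diagonal $\lambda_n(\textbf{x})$, superdiagonal $\alpha_{2n}(\textbf{x})$ and subdiagonal $\alpha_{2n-1}(\textbf{x})$, verifies $\lambda_0\geq_{\textbf{x}}\alpha_0$ and $\lambda_n\geq_{\textbf{x}}\alpha_{2n}+\alpha_{2n-1}$ using the slack $\beta_n(\textbf{x})$, and concludes via Proposition~\ref{prop+tri+TP}(i) together with Theorem~\ref{thm+Hakel+Jacbi+main}. Your write-up merely makes explicit the intermediate step (constructing the array $[D_{n,k}]$ and identifying $D_{n,0}=T_n(\textbf{x})$ through the continued fraction) that the paper leaves implicit.
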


\begin{proof}
In order to prove that $T_n(\textbf{x})$ form an $\textbf{x}$-SM and
$3$-$\textbf{x}$-LCX sequence, by Theorem
\ref{thm+Hakel+Jacbi+main}, it suffices to prove the corresponding
tridiagonal matrix
\[\left[\begin{array}{ccccc}
\lambda_0(\textbf{x})&\alpha_{0}(\textbf{x})&&&\\
\alpha_{1}(\textbf{x})&\lambda_1(\textbf{x})&\alpha_{2}(\textbf{x})&&\\
&\alpha_{3}(\textbf{x})&\lambda_2(\textbf{x})&\alpha_{4}(\textbf{x})&\\
&&\ddots&\ddots&\ddots\\
&&&\alpha_{2n-1}(\textbf{x})&\lambda_n(\textbf{x})\\
\end{array}\right]
\]
is $\textbf{x}$-TP in $\mathbb{R}[\textbf{x}]$. Clearly,
\begin{eqnarray*}
\lambda_0(\textbf{x})=\beta_0(\textbf{x})+\alpha_{0}(\textbf{x})&\geq_{x}&\alpha_{0}(\textbf{x}),\\
\lambda_n(\textbf{x})=\beta_n(\textbf{x})+\alpha_{2n}(\textbf{x})+\alpha_{2n-1}(\textbf{x})&\geq_{x}&\alpha_{2n}(\textbf{x})+\alpha_{2n-1}(\textbf{x})\quad\text{for}\quad
n\geq1.
\end{eqnarray*}
It follows from Proposition \ref{prop+tri+TP} (i) that we get the
desired total positivity.
\end{proof}
The following result will play an important role in the proof of
Theorem \ref{thm+Ring+PSE+SM}.

\begin{thm}\label{thm+Gass}
Assume
\begin{eqnarray*}
\sum_{n\geq0}G_n(a,b,c)z^n=1+\sum_{n\geq1}z^n\prod_{k=0}^{n-1}\frac{a+bk}{1-c(k+1)z}.
\end{eqnarray*}
Then we have continued fraction expansions
\begin{eqnarray*}
\sum_{n\geq0}G_n(a,b,c)z^n&=&\s[\alpha_{2n},\alpha_{2n+1};z]_{n\geq0}\\
&=&\J[\alpha_{2n}+\alpha_{2n-1},\alpha_{2n}\alpha_{2n+1};z]_{n\geq0},
\end{eqnarray*}
where $\alpha_{2n}(a)=a+nb$ and $\alpha_{2n+1}(a)=(c+b)(1+n)$ for
$n\geq0$. In addition, $G_n(a,b,c)$ form an $\textbf{x}$-SM and
$3$-$\textbf{x}$-LCX sequence with $\textbf{x}=(a,b,c)$ for $n\in
\mathbb{N}$.
\end{thm}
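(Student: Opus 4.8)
The positivity half of the statement is a direct consequence of the continued-fraction half: once the Jacobi expansion $\sum_{n\ge0}G_n(a,b,c)z^n=\J[\alpha_{2n}+\alpha_{2n-1},\alpha_{2n}\alpha_{2n+1};z]_{n\ge0}$ is established with $\alpha_{2n}=a+nb$ and $\alpha_{2n+1}=(c+b)(n+1)$, I would simply invoke Theorem \ref{thm+S+C} with the choice $\beta_n\equiv0$. Indeed $\alpha_{2n}=a+nb$ and $\alpha_{2n+1}=(c+b)(n+1)$ both lie in $\mathbb{R}^{\geq}[\textbf{x}]$ for $\textbf{x}=(a,b,c)$, and the required identities $\lambda_n=\beta_n+\alpha_{2n}+\alpha_{2n-1}$, $\mu_{n+1}=\alpha_{2n}\alpha_{2n+1}$ hold verbatim (with the convention $\alpha_{-1}=0$). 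Hence the real content is the continued-fraction expansion, and I will concentrate on it.

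The engine of the argument is a functional equation. Writing $\Phi(a,\gamma;z)=1+\sum_{n\ge1}z^n\prod_{k=0}^{n-1}\frac{a+bk}{1-(\gamma+kc)z}$ so that $\sum_{n\ge0}G_n(a,b,c)z^n=\Phi(a,c;z)$, I would factor the common $k=0$ denominator $1/(1-\gamma z)$ together with the leading factor $a$ out of every term with $n\ge1$ and then reindex $k\mapsto k-1$. Since under $k\mapsto k-1$ one has $a+bk\mapsto(a+b)+b(k-1)$ and $\gamma+kc\mapsto(\gamma+c)+(k-1)c$, this yields the clean recursion
\begin{equation*}
\Phi(a,\gamma;z)=1+\frac{az}{1-\gamma z}\,\Phi(a+b,\gamma+c;z).
\end{equation*}
This relation encodes the product expansion $\Phi(a,c;z)=1+\sum_{m\ge0}\prod_{k=0}^{m}u_k$ with $u_k=\dfrac{(a+kb)z}{1-(k+1)cz}$, to which Euler's continued-fraction transformation applies, giving
\begin{equation*}
\Phi(a,c;z)=\cfrac{1}{1-\cfrac{u_0}{1+u_0-\cfrac{u_1}{1+u_1-\cdots}}}.
\end{equation*}

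The final and hardest step is to convert this Euler fraction, whose partial quotients still carry the denominators $1-(k+1)cz$, into the advertised Stieltjes fraction $\s[\alpha_{2n},\alpha_{2n+1};z]_{n\ge0}$. The plan is to clear each denominator $1-(k+1)cz$ by an equivalence transformation (multiplying the relevant numerator and denominator by $1-(k+1)cz$ and absorbing the factors into neighbouring levels), and then to read off the result through the contraction formulae \eqref{contraction}--\eqref{contraction+decom}. The delicate point is the bookkeeping: clearing the denominators doubles the number of levels, so the single numerator weight $a+kb$ of $u_k$ must reappear as $\alpha_{2k}=a+kb$, while the denominator weight $(k+1)c$ must combine with a $b$-correction arising from the interaction of consecutive levels to produce exactly $\alpha_{2k+1}=(c+b)(k+1)$. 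I expect this reconciliation -- verifying that the transformation lands precisely on $\alpha_{2k}=a+kb$ and $\alpha_{2k+1}=(c+b)(k+1)$ rather than on some neighbouring weights -- to be the main obstacle; it can be settled either by carrying the equivalence transformation through in closed form, or, as a safeguard, by matching the first few $\alpha$'s against the power series of $\Phi(a,c;z)$ and propagating by induction on the contraction. An alternative route, sidestepping the Euler fraction, is to guess the tridiagonal array $D_{n,k}=(a+(k-1)b)D_{n-1,k-1}+(a+(2b+c)k)D_{n-1,k}+(c+b)(k+1)D_{n-1,k+1}$ and apply Theorem \ref{thm+Hakel+Jacbi+main}(i), which produces the claimed Jacobi fraction automatically; this merely transfers the difficulty onto proving $D_{n,0}=G_n(a,b,c)$. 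Once the Stieltjes form is in hand, the Jacobi form follows from the S-to-J contraction already recorded in the paper, and the positivity conclusion follows as in the first paragraph.
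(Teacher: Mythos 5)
Your setup is sound: the positivity reduction via Theorem \ref{thm+S+C} (with $\beta_n\equiv 0$) is exactly right, and your functional equation $\Phi(a,\gamma;z)=1+\frac{az}{1-\gamma z}\,\Phi(a+b,\gamma+c;z)$ is a correct (parameter-shift) variant of the one the paper uses. But the proposal has a genuine gap at precisely the point you flag yourself: the passage from the Euler fraction with rational partial quotients $u_k=\frac{(a+kb)z}{1-(k+1)cz}$ to the advertised S-fraction with \emph{polynomial} coefficients $\alpha_{2n}=a+nb$, $\alpha_{2n+1}=(c+b)(n+1)$ is never carried out. This is not deferrable bookkeeping; it is the entire content of the theorem, since the cancellation that merges the denominator weight $(k+1)c$ with a $b$-correction into $(c+b)(k+1)$ is exactly what has to be proved. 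Your two fallbacks do not close it: ``matching the first few $\alpha$'s and propagating by induction'' is not an argument until the induction is actually set up (checking finitely many coefficients proves nothing about a continued-fraction identity), and the alternative via Theorem \ref{thm+Hakel+Jacbi+main}(i) with the guessed tridiagonal recurrence, as you concede, only relocates the difficulty to proving $D_{n,0}=G_n(a,b,c)$, which you also do not do.

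The missing idea --- and the way the paper closes this loop --- is to take the functional equation in the form $F(a,z)=1+\frac{az}{1-cz}\,F\bigl(a+b,\frac{z}{1-cz}\bigr)$, with the substitution $z\mapsto z/(1-cz)$ rather than the parameter shift $\gamma\mapsto\gamma+c$, because this substitution acts on J-fractions in the simplest possible way: $\frac{1}{1-cz}\,\J[s_n,r_{n+1};\frac{z}{1-cz}]_{n\geq0}=\J[s_n+c,r_{n+1};z]_{n\geq0}$, i.e.\ it adds $c$ to every diagonal coefficient and leaves the off-diagonal products untouched. Writing the (unknown) S-fraction of $F(a,z)$ as $\s[\alpha_{2n}(a),\alpha_{2n+1}(a);z]_{n\geq0}$, contracting it via \eqref{contraction+decom} to get $F(a,z)=1+\frac{\alpha_0(a)z}{1-(\alpha_0(a)+\alpha_1(a))z-\cdots}$, and contracting $F(a+b,\cdot)$ via \eqref{contraction} inside the functional equation, one equates the two resulting J-fractions level by level and obtains the solvable system $\alpha_0(a)=a$, $\alpha_0(a)+\alpha_1(a)=c+\alpha_0(a+b)$, $\alpha_1(a)\alpha_2(a)=\alpha_0(a+b)\alpha_1(a+b)$, and so on, whose solution is the claimed $\alpha_{2n}(a)=a+nb$, $\alpha_{2n+1}(a)=(c+b)(n+1)$. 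If you prefer to salvage your own route, you must actually perform the equivalence transformation on the Euler fraction in closed form and verify this cancellation at every level; as it stands, the proposal identifies the obstacle but does not overcome it.
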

\begin{proof}
By Theorem \ref{thm+S+C}, it suffices to prove
\begin{eqnarray*}
\sum_{n\geq0}G_n(a,b,c)z^n
&=&\J[\alpha_{2n}+\alpha_{2n-1},\alpha_{2n}\alpha_{2n+1};z]_{n\geq0}
\end{eqnarray*}
with $\alpha_{2n}(a)=a+nb$ and $\alpha_{2n+1}(a)=(c+b)(1+n)$ for
$n\geq0$.

Define the function
\begin{eqnarray*}
F(a,z):=1+\sum_{n\geq1}z^n\prod_{k=0}^{n-1}\frac{a+bk}{1-c(k+1)z},
\end{eqnarray*}
which implies
 \begin{eqnarray}\label{eq+F}
 F(a,z)=1+\frac{az}{1-cz}\,F(a+b,\frac{z}{1-cz}).
\end{eqnarray}
Assume that
\begin{eqnarray*}
F(a,z)&=&\frac{1}{1-\frac{\alpha_0(a)z}{1-\frac{\alpha_1(a)z}{1-\frac{\alpha_2(a)z}{1-\ldots}}}}.
\end{eqnarray*}
It follows from the contraction formula (\ref{contraction}) that
\begin{eqnarray}
F(a,z) &=&\DF{1}{1- \alpha_0(a)z-\DF{\alpha_0(a)\alpha_1(a)z^2}{1-
(\alpha_1(a)+\alpha_2(a))z-\DF{\alpha_2(a)\alpha_3(a)z^2}{1-
(\alpha_3(a)+\alpha_4(a))z-\ldots}}}\label{Eq+S2}.
\end{eqnarray}
Combining (\ref{eq+F}) and (\ref{Eq+S2}), we have
\begin{eqnarray*}
&&F(a,z)\\
&=&1+\frac{az}{1-cz}\,F(a+b,\frac{z}{1-cz})\\
&=&1+\DF{az}{1-
(c+\alpha_0(a+b))z-\DF{\alpha_0(a+b)\alpha_1(a+b)z^2}{1-
(c+\alpha_1(a+b)+\alpha_2(a+b))z-\DF{\alpha_2(a+b)\alpha_3(a+b)z^2}{1-\ldots}}}.
\end{eqnarray*}
On the other hand, by the contraction formula
(\ref{contraction+decom}), we also have
\begin{eqnarray*}
F(a,z) &=&1+\DF{\alpha_0(a)z}{1-
(\alpha_0(a)+\alpha_1(a))z-\DF{\alpha_1(a)\alpha_2(a)z^2}{1-
(\alpha_2(a)+\alpha_3(a))z-\DF{\alpha_3(a)\alpha_4(a)z^2}{1-
(\alpha_4(a)+\alpha_5(a))z-\ldots}}}.
\end{eqnarray*}
Thus, we get equations
\begin{eqnarray*} \alpha_0(a)&=&a\\
\alpha_0(a)+\alpha_1(a)&=&c+\alpha_0(a+b)\\
\alpha_1(a)\alpha_2(a)&=&\alpha_0(a+b)\alpha_1(a+b)\\
\alpha_2(a)+\alpha_3(a)&=&c+\alpha_1(a+b)+\alpha_2(a+b)\\
&\cdots&.
\end{eqnarray*}
Solving equations, we get $\alpha_{2n}(a)=a+nb$ and
$\alpha_{2n+1}(a)=(c+b)(1+n)$ for $n\geq0$.
\end{proof}

\section{Proof of Theorem \ref{thm+Ring+PSE+SM}}

In this section, in order to apply previous results to prove Theorem
\ref{thm+Ring+PSE+SM}, the key is to obtain the continued fraction
expansion for the generating function $\sum_{n\geq0}T_n(q)t^n$.

We first present an important relationship between two different
arrays.
\begin{lem}\label{lem+shit}
Let $\{a_0,a_2,b_0,b_1,b_2,\lambda\}\subseteq \mathbb{R}$. Assume
that an array $[A_{n,k}]_{n,k\geq0}$ satisfies the recurrence
relation:
\begin{equation}\label{recurrence relation+A}
A_{n,k}=(a_0n-\lambda b_1k+a_2)A_{n-1,k}+(b_0n+b_1k+b_2)A_{n-1,k-1},
\end{equation}
where $A_{n,k}=0$ unless $0\le k\le n$ and $A_{0,0}=1$. If a
polynomial $B_n(q)=A_n(q+\lambda)$, then the coefficient array
$[B_{n,k}]_{n,k\geq0}$ of $B_n(q)$ satisfies the recurrence
relation:
\begin{equation*}
B_{n,k}=[(a_0+\lambda b_0)n+\lambda
b_1k+a_2+\lambda(b_1+b_2)]B_{n-1,k}+(b_0n+b_1k+b_2)B_{n-1,k-1},
\end{equation*}
where  $B_{0,0}=1$ and $B_{n,k}=0$ unless $0\le k\le n$.
\end{lem}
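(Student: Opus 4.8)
The plan is to prove the statement by a direct computation comparing the coefficient arrays of $A_n$ and $B_n$ through the substitution $q \mapsto q + \lambda$. The fundamental idea is that $B_n(q) = A_n(q+\lambda)$ is a polynomial identity in $q$, so I can extract the recurrence for $[B_{n,k}]$ by plugging the shift into the known recurrence for $A_n(q)$ and re-expanding in powers of $q$.

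First I would translate the entry recurrence \eqref{recurrence relation+A} into a recurrence at the level of the row-generating function $A_n(q) = \sum_k A_{n,k} q^k$. Multiplying \eqref{recurrence relation+A} by $q^k$ and summing over $k$, the three pieces become: the term $(a_0 n + a_2)A_{n-1,k}$ contributes $(a_0 n + a_2)A_{n-1}(q)$; the term $-\lambda b_1 k \, A_{n-1,k}$ contributes $-\lambda b_1 \, q\, A_{n-1}'(q)$ (since $\sum_k k A_{n-1,k} q^k = q A_{n-1}'(q)$); and the shifted-index term $(b_0 n + b_1 k + b_2)A_{n-1,k-1}$ contributes $q\,[(b_0 n + b_2)A_{n-1}(q) + b_1 (q A_{n-1}(q))']$ after reindexing $k \mapsto k+1$ and accounting for the factor $b_1 k = b_1(k-1) + b_1$. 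Collecting terms, I obtain a functional recurrence of the form $A_n(q) = P(q) A_{n-1}(q) + Q(q) A_{n-1}'(q)$ with explicit affine coefficients $P,Q$ in $q$.

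Next I would substitute $q \mapsto q + \lambda$. Since $B_n(q) = A_n(q+\lambda)$, the chain rule gives $B_n'(q) = A_n'(q+\lambda)$, so the functional recurrence transforms cleanly into $B_n(q) = P(q+\lambda)B_{n-1}(q) + Q(q+\lambda)B_{n-1}'(q)$. Expanding $P(q+\lambda)$ and $Q(q+\lambda)$ as affine functions of $q$ and then reversing the generating-function dictionary (reading off coefficients of $q^k$ and $k q^{k-1}$) yields the claimed entry recurrence for $[B_{n,k}]$. The key algebraic check is that the $\lambda$-shift correctly redistributes the drift terms: the coefficient of $B_{n-1,k}$ should pick up the extra summand $\lambda(b_0 n + b_1 k + b_2)$ that accounts for the constant part of $q$ coming out of the $q$-linear term, producing exactly $(a_0 + \lambda b_0)n + \lambda b_1 k + a_2 + \lambda(b_1+b_2)$, while the $B_{n-1,k-1}$ coefficient is unchanged at $b_0 n + b_1 k + b_2$.

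The main obstacle is purely bookkeeping: keeping the derivative operator $q\frac{d}{dq}$ and the index shifts $k \mapsto k\pm 1$ aligned so that the $\lambda b_1 k$ diagonal term and the $b_1 k$ subdiagonal term combine correctly after the shift. In particular I must verify that the problematic cross term $-\lambda b_1 k$ in the original diagonal coefficient flips sign to $+\lambda b_1 k$ in $B_{n,k}$ — this is the point where an error would silently propagate, so I would double-check it by directly matching the coefficient of $q^k$ on both sides rather than trusting the symbolic substitution. The boundary conditions $B_{0,0}=1$ and $B_{n,k}=0$ outside $0\le k\le n$ are immediate, since the degree-$n$ polynomial $A_n$ maps to a degree-$n$ polynomial $B_n$ under the affine shift and $B_0(q)=A_0(q+\lambda)=1$.
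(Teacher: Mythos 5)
Your proposal is correct and takes essentially the same route as the paper: the paper likewise converts the entry recurrence into the functional form $A_n(q)=[a_0n+a_2+(b_0n+b_1+b_2)q]A_{n-1}(q)+b_1(q-\lambda)qA'_{n-1}(q)$, substitutes $q\mapsto q+\lambda$ using $B_n'(q)=A_n'(q+\lambda)$, and reads off coefficients of $q^k$. (Two cosmetic slips that do not affect validity: the derivative coefficient $Q(q)=b_1q(q-\lambda)$ is quadratic rather than affine, and the ``extra summand'' in the diagonal coefficient is $\lambda(b_0n+b_1+b_2)$ together with the sign flip of $\lambda b_1k$, not $\lambda(b_0n+b_1k+b_2)$; your final stated recurrence is nonetheless exactly right.)
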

\begin{proof}
By the recurrence relation (\ref{recurrence relation+A}), we have
\begin{eqnarray*}
A_n(q) &=& [a_0n+a_2+(b_0n+b_1+b_2)q]A_{n-1}(q)+b_1(-\lambda
+q)qA'_{n-1}(q).
\end{eqnarray*}
Then using $B_n(q)=A_n(q+\lambda)$, we get
\begin{eqnarray*}
B_n(q) &=& [(a_0+\lambda
b_0)n+a_2+\lambda(b_1+b_2)+(b_0n+b_1+b_2)q]B_{n-1}(q)+b_1(\lambda
+q)qB'_{n-1}(q)
\end{eqnarray*}
for $n\geq1$. This implies that $[B_{n,k}]_{n,k\geq0}$ satisfies the
recurrence relation:
\begin{equation*}
B_{n,k}=[(a_0+\lambda b_0)n+\lambda
b_1k+a_2+\lambda(b_1+b_2)]B_{n-1,k}+(b_0n+b_1k+b_2)B_{n-1,k-1},
\end{equation*}
where $B_{n,k}=0$ unless $0\le k\le n$ and $B_{0,0}=1$.
\end{proof}

\begin{rem}
For a triangle $[T_{n,k}]_{n,k}$, define its {\it reciprocal
triangle} $[T^*_{n,k}]_{n,k}$ by
$$T^*_{n,k}=T_{n,n-k},\quad 0\leq k \leq n.$$ In addition, we have $T^*_n(q)=q^nT_n(\frac{1}{q})$.
\end{rem}

We present \textbf{the proof of Theorem \ref{thm+Ring+PSE+SM} as
follows:}

\begin{proof}
(i) From the recurrence relation in (i), we get
\begin{eqnarray}\label{eq+fst}
T_n(q)&=&(a_0n+a_2-a_0)T_{n-1}(q)+(b_0n+b_2-b_0)qT_{n-1}(q)\nonumber\\
&=&[(a_0n+a_2-a_0)+(b_0n+b_2-b_0)q]T_{n-1}(q)\nonumber\\
&=&\prod_{k=1}^n[(a_0+b_0q)k+a_2-a_0+(b_2-b_0)q]
\end{eqnarray}
for $n\geq1$. In Theorem \ref{thm+Gass}, if we take $$
a=a_2+b_2q,\quad \  b=a_0+b_0q\quad\text{and}\, c=0,$$ then we
immediately have
  \begin{eqnarray}\label{eq+fst+CF}
  \sum\limits_{n=0}^{\infty}T_{n}(q) z^n
  &=&\s[\alpha_{2n},\alpha_{2n+1};z]_{n\geq0}\nonumber\\
  &=&\J[\alpha_{2n-1}+\alpha_{2n},\alpha_{2n}\alpha_{2n+1};z]_{n\geq0}
\end{eqnarray}
and $(T_n(q))_{n\geq0}$ is an $\textbf{x}$-SM sequence with
$\textbf{x}=(a_0,a_2,b_0,b_2,q)$, where
$\alpha_{2n}=a_2+b_2q+n(a_0+b_0q)$ and
$\alpha_{2n+1}=(a_0+b_0q)(1+n)$.

(ii) First let us consider a degenerated array
$[A_{n,k}]_{n,k\geq0}$ satisfying the recurrence relation
$$A_{n,k}=a_0b_1(n-k-1)A_{n-1,k}+[b_0(n-1)+b_1(k-1)+b_2]A_{n-1,k-1}$$ for $n\geq1$ with $A_{0,0}=1$.
It is easy to find $A_{n,k}=0$ for $n\neq k$. Then by induction on
$n$, we immediately get for $n\geq1$ that
\begin{eqnarray*}
A_{n,k}&=&0\quad \text{for} \quad n\neq k,\\
A_{n,n}&=&\prod_{k=0}^{n-1}[b_2+(b_0+b_1)k].
\end{eqnarray*}
Clearly, this triangle $[A_{n,k}]_{n,k}$ is degenerated to a
diagonal sequence. So for $n\geq1$ the generating function
$$A_n(q)=q^nA_{n,n}=q^n\prod_{k=0}^{n-1}[b_2+(b_0+b_1)k].$$
For the array $[T_{n,k}]_{n,k\geq0}$ in (ii), by taking
$\lambda=-a_0$ in Lemma \ref{lem+shit}, we get
$$T_n(q)=A_n(q+a_0)$$ for $n\geq1$. So
$$T_n(q)=(q+a_0)^n\prod_{k=0}^{n-1}[b_2+(b_0+b_1)k]$$
for $n\geq1$. It follows from Theorem \ref{thm+Gass} that
\begin{eqnarray*}
\sum_{n\geq0}T_n(q)z^n&=&\s[\alpha_{2n},\alpha_{2n+1};z]_{n\geq0}\\
&=&\J[\alpha_{2n-1}+\alpha_{2n},\alpha_{2n}\alpha_{2n+1};z]_{n\geq0}
\end{eqnarray*}
with $\alpha_{2n}=(b_2+n(b_0+b_1))(q+a_0)$ and
$\alpha_{2n+1}=(b_0+b_1)(q+a_0)(1+n)$, and $(T_n(q))_{n\geq0}$ is an
$\textbf{x}$-SM sequence with $\textbf{x}=(a_0,b_0,b_1,b_2,q)$.

(iii) By Proposition \ref{prop+x+binomial+array} and \cite[Theorem
1]{Zhu19}, $[T_{n,k}]_{n,k\geq0}$ can be considered as the
$a_2$-binomial transformation of $[\widetilde{T}_{n,k}]_{n,k\geq0}$,
where $[\widetilde{T}_{n,k}]_{n,k\geq0}$ satisfies the recurrence
relation
\begin{equation*}
\widetilde{T}_{n,k}=a_1k\widetilde{T}_{n-1,k}+(b_1k+b_2-b_1)\widetilde{T}_{n-1,k-1}
\end{equation*}
for $n\geq1$ and $\widetilde{T}_{0,0}=1$. Let
$f_k=\sum_{n\geq0}\widetilde{T}_{n,k}z^n$ for $k\geq0$ and
$F(z)=\sum_{k\geq0}f_kq^k$. By the recurrence relation, we get
$$f_k=a_1kz f_k+(b_1k+b_2-b_1)zf_{k-1},$$
which implies
$$f_k=\prod_{i=1}^k\frac{(b_1i+b_2-b_1) z}{1-a_1i z}$$
for $k\geq1$. Thus, we have
\begin{eqnarray*}
F(z)&=&1+\frac{b_2qz}{1-a_1z}+\frac{b_2(b_1+b_2)q^2z^2}{(1-a_1z)(1-2a_1z)}+\cdots\\
&=&1+\sum_{k\geq1}z^k\prod_{i=1}^k\frac{(b_1i+b_2-b_1)q}{1-a_1iz}.
\end{eqnarray*}
It follows from Theorem \ref{thm+Gass} that
\begin{eqnarray*}
  \sum\limits_{n=0}^{\infty}\widetilde{T}_{n}(q) z^n&=&\J[\alpha_{2n-1}+\alpha_{2n},\alpha_{2n}\alpha_{2n+1};z]_{n\geq0}\\
  &=&\s[\alpha_{2n},\alpha_{2n+1};z]_{n\geq0},
\end{eqnarray*}
where $\alpha_{2n}=(nb_1+b_2)q$ and  $\alpha_{2n+1}=(n+1)(a_1+b_1q)$
for $n\geq0$. In addition, $(\widetilde{T}_n(q))_{n\geq0}$ is an
$\textbf{x}$-SM sequence. Thus by Proposition
\ref{prop+x+binomial+array}, we have
\begin{eqnarray}\label{Stir+Sec+CF}
  \sum\limits_{n=0}^{\infty}T_{n}(q) z^n&=&\J[a_2+\alpha_{2n-1}+\alpha_{2n},\alpha_{2n}\alpha_{2n+1};z]_{n\geq0},
\end{eqnarray}
and $(T_n(q))_{n\geq0}$ is an $\textbf{x}$-SM sequence with
$\textbf{x}=(a_1,a_2,b_1,b_2,q)$.

(iv) Clearly, the array $[T_{n,k}]_{n,k\geq0}$ in (iv) is the
reciprocal array of that in (iii). By (\ref{Stir+Sec+CF}), we have
\begin{eqnarray}\label{rec+ST}
  \sum\limits_{n=0}^{\infty}T_{n}(q) z^n
  &=&\J[b_2q+\alpha_{2n-1}+\alpha_{2n},\alpha_{2n}\alpha_{2n+1};z]_{n\geq0},
\end{eqnarray}
where $\alpha_{2n}=na_0+a_2$ and  $\alpha_{2n+1}=(n+1)(a_0+b_0q)$
for $n\geq0$. By Theorem \ref{thm+S+C}, $(T_n(q))_{n\geq0}$ is an
$\textbf{x}$-SM sequence with $\textbf{x}=(a_0,a_2,b_0,b_2,q)$

(v) For the array $[T_{n,k}]_{n,k}$, let
$B_n(q)=T_n(q+\frac{a_1}{b_0})$. By Lemma \ref{lem+shit}, we have
the coefficient array $[B_{n,k}]_{n,k\geq0}$ of $B_n(q)$ satisfying
the recurrence relation
$$B_{n,k}=\left[a_1n-a_1k+a_2+\frac{a_1(b_2-b_0)}{b_0}\right]B_{n-1,k}+(b_0n-b_0k+b_2)B_{n-1,k-1}$$
for $n\geq1$, which is a special case in (iv). It follows from the
continued fraction expansion in (\ref{rec+ST}) that
 \begin{eqnarray}\label{CF+EF}
  &&\sum\limits_{n=0}^{\infty}T_{n}(q) z^n=\sum\limits_{n=0}^{\infty}B_{n}(q-\frac{a_1}{b_0})
  z^n\nonumber\\
 &=&\J[n(a_1+b_0q)+a_2+b_2q,(n+1)(na_1b_0+a_2b_0+a_1b_2)q;z]_{n\geq0}\\
&=&\begin{cases}\s[na_1+a_2,(n+1)b_0q;z]_{n\geq0}
&\text{for}\quad  b_2=0\\
\s[(nb_0+b_2)q,(n+1)a_1;z]_{n\geq0}&\text{for}\quad a_2=0\\
\s[(n+1)b_0q,na_1+a_1+a_2;z]_{n\geq0}&\text{for}\quad  b_2=b_0\\
\s[(n+1)a_1,(nb_0+b_0+b_2)q;z]_{n\geq0}&\text{for} \quad
a_2=a_1.\end{cases}\nonumber
\end{eqnarray}
By Theorem \ref{thm+S+C}, $(T_n(q))_{n\geq0}$ is an $\textbf{x}$-SM
sequence with $\textbf{x}=(a_1,a_2,b_0,b_2,q)$  for
  $0\in\{a_2,b_2,a_1-a_2,b_0-b_2\}$.

(vi) For the array $[T_{n,k}]_{n,k}$ in (vi), in what follows, we
will prove
\begin{eqnarray}\label{CF++}
\sum\limits_{n=0}^{\infty}T_{n}(q)
z^n=\J[(a_1n+a_2)(q+\frac{2b_0}{a_1}),\frac{b_0(a_1n+2a_2)(n+1)(q+\frac{2b_0}{a_1})}{2};z]_{n\geq0},
\end{eqnarray} which immediately implies that $(T_n(q))_{n\geq0}$ is
an $\textbf{x}$-SM sequence with $\textbf{x}=(a_1,a_2,b_0,q)$ by
taking $s_n=(a_1 n+a_2)(q+\frac{2b_0}{a_1})$,
$r_n=(\frac{na_1}{2}+a_2)(q+\frac{2b_0}{a_1})$ and $t_n=b_0n$ for
$n\geq0$ in Theorem \ref{thm+Hakel+Jacbi+main} (ii) and Proposition
\ref{prop+tri+TP} (i).

Let $S^*_n(q+\frac{2b_0}{a_1})=T_n(q)$ and
$S^*_n(q)=\sum_{k\geq0}S^*_{n,k}q^k$ for $n\geq0$. Then for
(\ref{CF++}) it suffices to prove
\begin{eqnarray}\label{CF+++}
\sum\limits_{n=0}^{\infty}S^*_{n}(q)
z^n=\J[(a_1n+a_2)q,\frac{b_0(a_1n+2a_2)(n+1)q}{2};z]_{n\geq0}.
\end{eqnarray}
Moreover, by Lemma \ref{lem+shit}, the array
$[S^*_{n,k}]_{n,k\geq0}$ satisfies the recurrence relation:
\begin{equation}
S^*_{n,k}=b_0\left(2k-n+1\right)S^*_{n-1,k}+[a_1\,(n-k)+a_2]S^*_{n-1,k-1}
\end{equation}
for $0\leq k\leq n$ with $S^*_{0,0}=1$ (In addition, it is easy to
prove $S^*_{n,k}=0$ for $k< (n-1)/2$). In view of its reciprocal
array, for (\ref{CF+++}), we will demonstrate
\begin{eqnarray}\label{CF++++}
\sum\limits_{n=0}^{\infty}S_{n}(q)
z^n=\J[a_1n+a_2,\frac{b_0(a_1n+2a_2)(n+1)q}{2};z]_{n\geq0},
\end{eqnarray}
where $S_n(q)$ is the row-generating function of the reciprocal
array $[S_{n,k}]_{n,k\geq0}$ satisfying the recurrence relation:
\begin{equation}\label{S+recurrence relation}
S_{n,k}=(a_1k+a_2)S_{n-1,k}+b_0(n-2k+1)S_{n-1,k-1}
\end{equation}
for $0\leq k\leq n$, where $S_{0,0}=1$. It is not hard to get
(\ref{CF++++}) from the next claim and the continued fraction
expression (\ref{CF+EF}) in (v).

\begin{cl}\footnote{This result was proved in \cite{Zhu20}. For convenience of the reader, we cite
its proof to be self-contained.} For the array $[S_{n,k}]_{n,k}$ in
(\ref{S+recurrence relation}), there exists an array
$[E_{n,k}]_{n,k}$ satisfying the recurrence relation:
\begin{equation}\label{two+recurrence relation}
E_{n,k}=(a_1k+a_2)E_{n-1,k}+[a_1(n-k)+a_2]E_{n-1,k-1}
\end{equation}
with $E_{n,k}=0$ unless $0\le k\le n$ and $E_{0,0}=1$ such that
\begin{eqnarray*}
E_n(x)=(1+x)^nS_n\left(\frac{\frac{2a_1}{b_0} x}{(1+x)^{2}}\right)
\end{eqnarray*}
for $n\geq1$.
\end{cl}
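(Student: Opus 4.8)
The plan is to reduce the claimed identity to a verification at the level of row-generating functions, by encoding both three-term recurrences as first-order linear differential recurrences in a single variable and then checking that the proposed substitution intertwines them. Writing $S_n(q)=\sum_k S_{n,k}q^k$ and $E_n(x)=\sum_k E_{n,k}x^k$, I would \emph{define} the array $[E_{n,k}]_{n,k}$ by the recurrence (\ref{two+recurrence relation}) with $E_{0,0}=1$, set $\widehat{E}_n(x):=(1+x)^nS_n(u)$ with $u=u(x)=\frac{2a_1x}{b_0(1+x)^2}$, and prove $E_n=\widehat{E}_n$ by induction on $n$. Since the differential recurrence below determines $E_n$ algebraically from $E_{n-1}$ and $E_{n-1}'$, uniqueness is immediate, and the whole claim (including the polynomiality of $\widehat{E}_n$) follows once $\widehat{E}_n$ is shown to obey the same recurrence with the same base value.

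First I would translate both recurrences. Multiplying (\ref{S+recurrence relation}) by $q^k$, summing, and using $\sum_k kS_{n-1,k}q^k=qS_{n-1}'(q)$ gives
\begin{equation*}
S_n(q)=[a_2+b_0(n-1)q]\,S_{n-1}(q)+(a_1q-2b_0q^2)\,S_{n-1}'(q),
\end{equation*}
while the same manipulation on (\ref{two+recurrence relation}) gives
\begin{equation*}
E_n(x)=[a_2(1+x)+a_1(n-1)x]\,E_{n-1}(x)+a_1x(1-x)\,E_{n-1}'(x).
\end{equation*}
I would also record the support bound $S_{n,k}=0$ for $k>\lfloor n/2\rfloor$, by an easy induction: for $k$ beyond the range either $S_{n-1,k}$ and $S_{n-1,k-1}$ both vanish, or the coefficient $b_0(n-2k+1)$ is exactly $0$ at the boundary $k=(n+1)/2$. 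This guarantees that $(1+x)^nS_n(u)=\sum_kS_{n,k}(2a_1/b_0)^kx^k(1+x)^{n-2k}$ is genuinely a polynomial of degree $\le n$ with nonnegative exponents, so $\widehat{E}_n$ is a legitimate candidate.

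Next I would carry out the intertwining computation. Writing $P=S_{n-1}(u)$ and $Q=S_{n-1}'(u)$, differentiating $E_{n-1}=(1+x)^{n-1}P$ with $u'(x)=\frac{2a_1(1-x)}{b_0(1+x)^3}$ yields $E_{n-1}'(x)=(n-1)(1+x)^{n-2}P+\frac{2a_1(1-x)}{b_0}(1+x)^{n-4}Q$. Substituting $E_{n-1}$ and $E_{n-1}'$ into the right-hand side of the $E$-recurrence and collecting the $P$-terms, the $P$-coefficient collapses via $(1+x)+(1-x)=2$ to $(1+x)^{n-2}[a_2(1+x)^2+2a_1(n-1)x]$; expanding $\widehat{E}_n=(1+x)^nS_n(u)$ by the $S$-recurrence and using $b_0(n-1)u=\frac{2a_1(n-1)x}{(1+x)^2}$ reproduces exactly this coefficient. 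The decisive step is the $Q$-term: the specific form of $u$ makes $a_1u-2b_0u^2=\frac{2a_1^2x(1-x)^2}{b_0(1+x)^4}$, so $(1+x)^n(a_1u-2b_0u^2)=\frac{2a_1^2}{b_0}x(1-x)^2(1+x)^{n-4}$, which matches the $Q$-contribution $a_1x(1-x)\cdot\frac{2a_1(1-x)}{b_0}(1+x)^{n-4}$ coming from $E_{n-1}'$.

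The main obstacle is precisely that last algebraic identity, which is where the seemingly arbitrary substitution is forced. The simplification $(1+x)^2-4x=(1-x)^2$ produces the matching pair of $(1-x)$ factors needed to reconcile the drift term $-2b_0q^2S_{n-1}'$ on the $S$-side with the $(1-x)$ arising from $u'$ on the $E$-side; a different rational substitution would not align these. Once this coefficient matching is confirmed, together with the base case $\widehat{E}_0(x)=1=(1+x)^0S_0(u)=E_0(x)$, the induction closes, giving $E_n=\widehat{E}_n$ for all $n$, so the coefficient array of $(1+x)^nS_n(u)$ satisfies (\ref{two+recurrence relation}) and the claim follows.
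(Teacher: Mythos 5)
Your proposal is correct and follows essentially the same route as the paper's own proof: both define $[E_{n,k}]$ by the recurrence, translate the two recurrences into the first-order differential forms $S_n(q)=[a_2+b_0(n-1)q]S_{n-1}(q)+q(a_1-2b_0q)S_{n-1}'(q)$ and $E_n(x)=[a_2(1+x)+a_1(n-1)x]E_{n-1}(x)+a_1x(1-x)E_{n-1}'(x)$, and close the induction by differentiating the inductive hypothesis and matching coefficients of $S_{n-1}(u)$ and $S_{n-1}'(u)$, with the key identity $(1+x)^2-4x=(1-x)^2$. The only differences are cosmetic: you add an explicit support bound $S_{n,k}=0$ for $k>\lfloor n/2\rfloor$ to justify polynomiality (a point the paper leaves implicit) and you start the induction at $n=0$ instead of checking $n=1$ by hand.
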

\begin{proof}
We will prove this result by in induction on $n$. Let
$\frac{2a_1}{b_0}=\lambda$. It is obvious for $n=0$. For $n=1$,
$E_1(x)=a_2+a_2x$ and $S_1(x)=a_2$. Therefore we have
$$E_1(x)=(1+x)S_1\left(\frac{\lambda x}{(1+x)^{2}}\right).$$ In the following we
assume $n\geq2$. By the inductive hypothesis, we have
\begin{eqnarray}
E'_{n-1}(x)=(n-1)(1+x)^{n-2}S_{n-1}(\frac{\lambda
x}{(1+x)^{2}})+\frac{(1+x)^{n-1}\lambda
(1-x)}{(1+x)^3}S'_{n-1}(\frac{\lambda x}{(1+x)^{2}}).
\end{eqnarray}
On the other hand, by the recurrence relations (\ref{S+recurrence
relation}) and (\ref{two+recurrence relation}), we have
\begin{eqnarray*}
S_n(x)&=&[a_2+b_0(n-1)x]S_{n-1}(x)+x(a_1-2b_0x)S'_{n-1}(x),\\
E_n(x)&=&[a_2(x+1)+a_1x(n-1)]E_{n-1}(x)+a_1x(1-x)E'_{n-1}(x).
\end{eqnarray*} Then we get
\begin{eqnarray*}
E_n(x) &=&[a_2(x+1)+a_1x(n-1)](1+x)^{n-1}S_{n-1}\left(\frac{\lambda
x}{(1+x)^{2}}\right)+a_1x(1-x)\\
&&\times\left[(n-1)(1+x)^{n-2}S_{n-1}\left(\frac{\lambda
x}{(1+x)^{2}}\right)+\frac{(1+x)^{n-1}\lambda
(1-x)}{(1+x)^3}S'_{n-1}\left(\frac{\lambda x}{(1+x)^{2}}\right)\right]\\
&=&(1+x)^n\left[a_2+\frac{a_1(n-1)x}{1+x}\right]S_{n-1}\left(\frac{\lambda
x}{(1+x)^{2}}\right)+a_1x(1-x)(1+x)^n\\
&&\times\left[\frac{(n-1)}{(1+x)^{2}}S_{n-1}\left(\frac{\lambda
x}{(1+x)^{2}}\right)+\frac{\lambda
(1-x)}{(1+x)^4}S'_{n-1}\left(\frac{\lambda
x}{(1+x)^{2}}\right)\right]\\
&=&(1+x)^n\left[a_2+b_0(n-1)\times\frac{\lambda
x}{(1+x)^2}\right]S_{n-1}\left(\frac{\lambda
x}{(1+x)^{2}}\right)+(1+x)^n\times \frac{\lambda
x}{(1+x)^{2}}\times\\
&&\left[a_1-2b_0\times\frac{\lambda x}{(1+x)^{2}}\right]\times
S'_{n-1}\left(\frac{\lambda
x}{(1+x)^{2}}\right)\\
&=&(1+x)^nS_{n}\left(\frac{\lambda x}{(1+x)^{2}}\right).
\end{eqnarray*}
This proves the claim.
\end{proof}
(vii) Obviously, the array in (vii) is the reciprocal array of that
in (vi). By (\ref{CF++}), we have the Jacobi continued fraction
expansion
\begin{eqnarray*}
\sum\limits_{n=0}^{\infty}T_{n}(q)
z^n=\J[(a_1n+a_2)(1+\frac{2b_0}{a_1}q),\frac{b_0(a_1n+2a_2)(n+1)(1+\frac{2b_0q}{a_1})q}{2};z]_{n\geq0}
\end{eqnarray*}
and $(T_n(q))_{n\geq0}$ is an $\textbf{x}$-SM sequence with
$\textbf{x}=(a_1,a_2,b_0,q)$.

The proof is complete.
\end{proof}

\section{Proof of Theorem \ref{thm+three+PSE+SM}}
\begin{proof}
(i) By Theorem \ref{thm+m+tran}, there exists an array
$[A_{n,k}]_{n,k\geq0}$ satisfying the recurrence relation
 \begin{eqnarray}
A_{n,k}=(a_1k+a_2)A_{n-1,k}+[b_1(k-1)+b_2]A_{n-1,k-1}
\end{eqnarray}
with $A_{0,0}=1$ and $A_{n,k}=0$ unless $0\le k\le n$ such that
their row-generating functions satisfy
\begin{eqnarray}\label{rel+T+A}
T_n(q)=(\lambda+dq)^nA_n(\frac{q}{\lambda+dq})
\end{eqnarray} for $n\geq0$.
It follows from (\ref{Stir+Sec+CF}) that
\begin{eqnarray}
  \sum\limits_{n=0}^{\infty}T_{n}(q) z^n&=&\J[a_2(\lambda+dq)+\alpha_{2n-1}+\alpha_{2n},\alpha_{2n}\alpha_{2n+1};z]_{n\geq0},
\end{eqnarray}
where $\alpha_{2n}=(nb_1+b_2)q$ and
$\alpha_{2n+1}=(n+1)[(a_1d+b_1)q+\lambda a_1]$ for $n\geq0$. Thus by
Theorem \ref{thm+S+C}, we get that $(T_n(q))_{n\geq0}$ is an
$\textbf{x}$-SM sequence with
$\textbf{x}=(a_1,a_2,b_1,b_2,\lambda,q)$.

(ii) By Theorem \ref{thm+m+tran}, there exists an array
$[A_{n,k}]_{n,k\geq0}$ satisfying the recurrence relation
 \begin{eqnarray}
A_{n,k}=(a_0n-a_0k-a_0+a_2)A_{n-1,k}+[b_0(n-k)+b_2]A_{n-1,k-1}
\end{eqnarray}
with $A_{0,0}=1$ and $A_{n,k}=0$ unless $0\le k\le n$ such that
their row-generating functions satisfy
\begin{eqnarray}\label{rel+T+A}
T_n(q)=(\lambda+dq)^nA_n(\frac{q}{\lambda+dq})
\end{eqnarray} for $n\geq0$.
It follows from (\ref{rec+ST}) that
\begin{eqnarray}
  \sum\limits_{n=0}^{\infty}T_{n}(q) z^n
  &=&\J[b_2q+\alpha_{2n-1}+\alpha_{2n},\alpha_{2n}\alpha_{2n+1};z]_{n\geq0},
\end{eqnarray}
where $\alpha_{2n}=(na_0+a_2)(\lambda+dq)$ and
$\alpha_{2n+1}=(n+1)[(a_0d+b_0)q+\lambda a_0]$ for $n\geq0$. By
Theorem \ref{thm+S+C}, $(T_n(q))_{n\geq0}$ is an $\textbf{x}$-SM
sequence with $\textbf{x}=(a_0,a_2,b_0,b_2,d,\lambda,q)$.

(iii) By Theorem \ref{thm+m+tran}, there exists an array
$[A_{n,k}]_{n,k\geq0}$ satisfying the recurrence relation
 \begin{eqnarray}
A_{n,k}=(a_1k+a_2)A_{n-1,k}+[b_0(n-k)+b_2]A_{n-1,k-1}
\end{eqnarray}
with $A_{0,0}=1$ and $A_{n,k}=0$ unless $0\le k\le n$ such that
their row-generating functions satisfy
\begin{eqnarray}\label{rel+T+A}
T_n(q)=(\lambda+dq)^nA_n(\frac{q}{\lambda+dq})
\end{eqnarray} for $n\geq0$.
It follows from (\ref{CF+EF}) that
\begin{eqnarray*}
  &&\sum\limits_{n=0}^{\infty}T_{n}(q) z^n\nonumber\\
 &=&\J[n(a_1(\lambda+dq)+b_0q)+a_2(\lambda+dq)+b_2q,(n+1)(na_1b_0+a_2b_0+a_1b_2)q(\lambda+dq);z]_{n\geq0}\nonumber\\
&=&\begin{cases}\s[(na_1+a_2)(\lambda+dq),(n+1)b_0q;z]_{n\geq0}
&\text{for}\quad  b_2=0\\
\s[(nb_0+b_2)q,(n+1)a_1(\lambda+dq);z]_{n\geq0}&\text{for}\quad a_2=0\\
\s[(n+1)b_0q,(na_1+a_1+a_2)(\lambda+dq);z]_{n\geq0}&\text{for}\quad  b_2=b_0\\
\s[(n+1)a_1q,(nb_0+b_0+b_2)(\lambda+dq);z]_{n\geq0}&\text{for} \quad
a_2=a_1.\end{cases}
\end{eqnarray*}
Then by Theorem \ref{thm+S+C}, $(T_n(q))_{n\geq0}$ is an
$\textbf{x}$-SM sequence with
$\textbf{x}=(a_1,a_2,b_0,b_2,d,\lambda,q)$ for
  $0\in\{a_2,b_2,a_1-a_2,b_0-b_2\}$.

\end{proof}

\section{Proof of Theorem \ref{thm+sequence+SM}}

\begin{proof}

It follows from the recurrence relation
  $$T_{n,k}=(a_0n-\mu
  b_1k+a_2)T_{n-1,k}+(b_0n+b_1k+b_2)T_{n-1,k-1}$$ that
  $$T_n(q)=[a_0n+a_2+q(b_0n+b_1+b_2)]T_{n-1}(q)+q(-\mu b_1+b_1q)T'_{n-1}(q).$$
  Setting $q=\mu$, we have
 \begin{eqnarray*}
 T_n(\mu) &=&\prod_{k=1}^n[(a_0+\mu b_0)k+a_2+\mu(b_1+b_2)]
 \end{eqnarray*} for
  $n\geq1$.
 By Theorem \ref{thm+Gass}, we immediately get that
 \begin{eqnarray*}
\sum_{n\geq0}T_n(\mu)z^n
&=&\J[\alpha_{2n}+\alpha_{2n-1},\alpha_{2n}\alpha_{2n+1};z]_{n\geq0}
\end{eqnarray*}
with $\alpha_{2n}(a)=a_0+a_2+\mu(b_0+b_1+b_2)+n(a_0+\mu b_0)$ and
$\mu_{2n+1}(a)=(a_0+\mu b_0)(1+n)$ for $n\geq0$ and
$(T_n(\mu))_{n\geq0}$ is an $\textbf{x}$-SM sequence with
$\textbf{x}=(a_0,a_2,b_0,b_1,b_2,\mu)$.

\end{proof}

\section{Examples}
In this section, we present some examples related to continued
fractions.
\begin{ex}[\textbf{Factorial numbers}]
By (\ref{eq+fst+CF}), if $a_0=b_2=q=1$ and $b_0=a_2=0$, then we
immediately have
\begin{eqnarray*}
\sum_{n\geq0} n!z^n&=&\s[n+1,n+1;z]_{n\geq0}
\end{eqnarray*}
and similarly if $a_0=a_2=b_0=q=1$ and $b_2=0$, then we have
\begin{eqnarray*}
\sum_{n\geq0}(2n-1)!!z^n&=&\s[1+2n,2(1+n);z]_{n\geq0},
\end{eqnarray*}
which were proved by Euler \cite{Eul76}. Thus the continued fraction
expansion in (\ref{eq+fst+CF}) can be looked at a generalization of
above two results and the corresponding row-generating function
$T_n(a_0,a_2,b_0,b_2,q)$ in (\ref{eq+fst}) can be viewed as a
five-variable refinement of $n!$ and $(2n-1)!!$.
\end{ex}

\begin{ex}[\textbf{Whitney numbers of the first kind}]
In \cite{CJ12}, the $r$-Whitney numbers of the first kind, denoted
by $w_{m,r}(n, k)$, satisfy the recurrence relation
\begin{equation*}
w_{m,r}(n, k)=[(n-1)m+r]w_{m,r}(n-1, k)+w_{m,r}(n-1, k-1)
\end{equation*}
with $w_{m,r}(0, 0)=1$. It reduces to the signless Stirling number
of the first kind for $m=r=1$. Let the row-generating functions
$w_{n}(m,r,q)=\sum_{k\geq0}w_{m,r}(n, k)q^k$ for $n\geq0$. It
follows from Theorem \ref{thm+Ring+PSE+SM} (i) and (\ref{eq+fst+CF})
that we have
\begin{eqnarray*}
\sum_{n\geq0}w_{n}(m,r,q)z^n&=&\J[r+q+2mn,(r+q+mn)m(n+1);z]_{n\geq0}\\
&=&\s[r+q+mn,m(n+1);z]_{n\geq0}
\end{eqnarray*}
and $(w_{n}(m,r,q))_{n\geq0}$ is an $\textbf{x}$-SM and
$3$-$\textbf{x}$-LCX sequence with $\textbf{x}=(m,r,q)$. Thus by
Theorem \ref{thm+Ring+PSE+SM} (i),  the corresponding row-generating
function $T_n(a_0,a_2,b_0,b_2,q)$ in (\ref{eq+fst}) can be
considered as a five-variable refinement of $w_{n}(m,r,q)$.
\end{ex}

\begin{ex}[\textbf{Stirling permutations}]
Stirling permutations were introduced by Gessel and Stanley
\cite{GS78}. A Stirling permutation of order $n$ is a permutation of
the multiset $\{1^2,2^2,3^2, \ldots, n^2\}$ such that every element
between the two occurrences of $i$ are greater than $i$ for each
$i\in [n]$, where $[n] = \{1, 2, \ldots, n\}$. Denote by $Q_n$ the
set of Stirling permutations of order $n$. For $\sigma
=\sigma_1\sigma_2\ldots\sigma_{2n}\in Q_n$, an index $i\in[2n-1]$ is
an ascent plateau if $\sigma_{i-1} < \sigma_i = \sigma_{i+1}$. Let
$ap(\sigma)$ be the number of the ascent plateaus of $\sigma$. Let
$N_{n,k}=|\{\sigma\in Q_n: ap(\sigma)=k\}$. It is known that the
array $[N_{n,k}]_{n,k\geq0}$ satisfies the recurrence relation
\begin{eqnarray}
N_{n+1,k}&=&2kN_{n,k}+(2n-2k+3)N_{n,k-1}
\end{eqnarray}
with initial conditions $N_{1,1}=1$ and $N_{1,k}=0$ for $k\geq2$ or
$k\leq0$, see \cite{MY15}. Additionally, it is also related to the
perfect matching. A perfect matching of $[2n]$ is a set partition of
$[2n]$ with blocks (disjoint nonempty subsets) of size exactly $2$.
Let $M_{2n}$ be the set of matchings of $[2n]$, and let $M\in
M_{2n}$. Then $N_{n,k}$ is also the number of perfect matchings in
$M_{2n}$ with the restriction that only $k$ matching pairs have odd
smaller entries, see \cite{MY15}. For brevity, we let $N_{0,0}=1$
and $N_{0,k}=0$ for $k>1$ or $k<0$. It is clear that
$[N_{n,k}]_{n,k\geq0}$ satisfies the recurrence relation
\begin{eqnarray}
N_{n,k}&=&2kN_{n-1,k}+(2n-2k+1)N_{n-1,k-1}
\end{eqnarray}
with $N_{0,0}=1$ and $N_{0,k}=0$ for $k\geq1$ or $k<0$. Let
$N_n(q)=\sum_{k\geq0}N_{n,k}q^k$ for $n\in \mathbb{N}$. By Theorem
\ref{thm+Ring+PSE+SM} (v) and (\ref{CF+EF}), we have the continued
fraction expansion
\begin{eqnarray*}
\sum_{n\geq0}N_n(q)z^n&=&\s[(2n+1)q,2(n+1);z]_{n\geq0}\\
&=&\J[(2n+1)q+2n,2(n+1)(2n+1)q;z]_{n\geq0}
\end{eqnarray*}
and $(N_n(q))_{n\geq0}$ is $q$-SM and $3$-$q$-log-convex.
\end{ex}

\begin{ex}[\textbf{Minimax trees}]
Let $\mathcal {M}_{n,k}$ denote the set of all trees with $n$
vertices and $k$ leaves. Denote by $hr(T)$ the number of inner
vertices of the second kind of a minimax tree $T$. Denote by $cr(T)$
the number of inner vertices $s$ of a minimax tree $T$ which have
maximum label (it means that the label is maximum in the subtree
$T(s)$ of root $s$) so that an increasing tree $T$ is a tree
satisfying $cr(T)=0$. Let
$\overrightarrow{m}_{n,k}(p,q)=m_{n+1,k+1}(p,q)$, where
$$m_{n,k}(p,q):=\sum_{T\in \mathcal {M}_{n,k}}p^{hr(T)}q^{cr(T)}$$ for $n,k\geq0$ and $\overrightarrow{m}_{0,0}(p,q)=1$.
Then \begin{eqnarray}\label{ex+FH}
\overrightarrow{m}_{n,k}(p,q)=(1+p)(q+1)(k+1)\overrightarrow{m}_{n-1,k}(p,q)+(n-2k+1)\overrightarrow{m}_{n-1,k-1}(p,q)
\end{eqnarray}
for $0\leq k\leq (n+1)/2$, see Foata and Han \cite[Propositon
3.1]{FH01}. Let its row-generating function
$\mathscr{M}_n(x)=\sum_{k\geq0}\overrightarrow{m}_{n,k}(p,q)x^k$.
Then by (\ref{CF++++}), we have
\begin{eqnarray}
\sum\limits_{n=0}^{\infty}\mathscr{M}_n(x)
z^n=\J[(1+p)(q+1)(n+1),\frac{(1+p)(q+1)(n+2)(n+1)x}{2};z]_{n\geq0}.
\end{eqnarray}
It is easy to check that $(\mathscr{M}_n(x))_{n\geq0}$ is not
$x$-log-convex. But its reciprocal polynomial $\mathscr{M}^*_n(x)$
has the following property by the proof of (vi) in Theorem
\ref{thm+Ring+PSE+SM}.

\begin{prop}
For $n\geq0$, $\mathscr{M}^*_n\left(x+\frac{2}{(p+1)(q+1)}\right)$
form an $\textbf{x}$-SM and $3$-$\textbf{x}$-LCX sequence with
$\textbf{x}=(x,p,q)$.
\end{prop}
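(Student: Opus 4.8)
The plan is to derive the proposition from Theorem \ref{thm+Ring+PSE+SM}(vi), by identifying $\mathscr{M}^*_n\left(x+\frac{2}{(p+1)(q+1)}\right)$ with the row-generating function of a (vi)-type array. First I would observe that the minimax array is itself the $S$-array (\ref{S+recurrence relation}) from the proof of (vi) under the specialization $a_1=a_2=(1+p)(q+1)$ and $b_0=1$: with these values (\ref{S+recurrence relation}) reads $S_{n,k}=(1+p)(q+1)(k+1)S_{n-1,k}+(n-2k+1)S_{n-1,k-1}$, which is exactly (\ref{ex+FH}). Hence $\mathscr{M}_n(x)=S_n(x)$ (consistent with the example's use of (\ref{CF++++})), and passing to reciprocal triangles gives $\mathscr{M}^*_n(x)=S^*_n(x)$, where $S^*$ is the auxiliary array built in the proof of (vi); its support $k\geq(n-1)/2$ matches the vanishing $S^*_{n,k}=0$ for $k<(n-1)/2$ recorded there.

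Next I would invoke the key identity $S^*_n\left(q+\frac{2b_0}{a_1}\right)=T_n(q)$ proved in (vi), with $T_n$ the (vi) row-generating function. For the minimax values this reads $\mathscr{M}^*_n\left(x+\frac{2}{(p+1)(q+1)}\right)=T_n(x)$, which is exactly the sequence in the proposition. Its generating function is then furnished by the continued fraction (\ref{CF++}); after substituting $a_1=a_2=(1+p)(q+1)$ and $b_0=1$, the Jacobi coefficients simplify to $s_n=(n+1)\left((1+p)(q+1)x+2\right)$ with off-diagonal entries $\frac{(n+1)(n+2)\left((1+p)(q+1)x+2\right)}{2}$. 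Choosing $r_n=\frac{n+2}{2}\left((1+p)(q+1)x+2\right)$ and $t_n=n$, so that $r_nt_{n+1}$ equals the off-diagonal, all three families lie in $\mathbb{R}^{\geq}[(x,p,q)]$, and the dominance hypothesis $s_n\geq_{\textbf{x}}r_n+t_n$ of Proposition \ref{prop+tri+TP}(i) reduces to $\frac{n}{2}(1+p)(q+1)x\geq_{\textbf{x}}0$, which is immediate (and $s_0=r_0$). Theorem \ref{thm+Hakel+Jacbi+main}(ii) then yields $\textbf{x}$-total positivity of the Hankel matrix, hence the $\textbf{x}$-SM property, while Theorem \ref{thm+Hakel+Jacbi+main}(iii) (equivalently Theorem \ref{thm+S+C}) upgrades this to $3$-$\textbf{x}$-log-convexity, with $\textbf{x}=(x,p,q)$.

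The main obstacle, and the reason the reciprocal rather than $\mathscr{M}_n$ itself (which the example notes is not even $x$-log-convex) carries the Stieltjes moment property, is the denominator $a_1$ in the shift $\frac{2b_0}{a_1}$: in general $(a_1n+a_2)\left(x+\frac{2b_0}{a_1}\right)$ is merely rational in $a_1$, so joint positivity in $(x,p,q)$ is not automatic from (\ref{CF++}). The point requiring care is that the minimax specialization lies on the locus $a_2=a_1$, where $a_1n+a_2=a_1(n+1)$ and $a_1n+2a_2=a_1(n+2)$ are both divisible by $a_1$; this cancellation clears the denominator and leaves honest polynomials with nonnegative coefficients in $(x,p,q)$. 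The only remaining routine work is the index-shift verification that $\mathscr{M}^*_{n,k}=\overrightarrow{m}_{n,n-k}$ satisfies the reciprocal of (\ref{ex+FH}) together with the elementary dominance inequality above; all the substantive content is already supplied by Theorems \ref{thm+Ring+PSE+SM}(vi), \ref{thm+S+C} and \ref{thm+Hakel+Jacbi+main}.
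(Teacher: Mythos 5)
Your proposal is correct and follows essentially the same route as the paper: the paper proves this proposition precisely by specializing the proof of Theorem \ref{thm+Ring+PSE+SM}(vi) at $a_1=a_2=(1+p)(q+1)$, $b_0=1$, exactly as you do --- identifying $\mathscr{M}_n$ with the $S$-array of (\ref{S+recurrence relation}), using the shift identity $S^*_n\left(x+\tfrac{2b_0}{a_1}\right)=T_n(x)$ and the continued fraction (\ref{CF++}), and then feeding $s_n=(n+1)\left[(1+p)(q+1)x+2\right]$, $r_n=\tfrac{n+2}{2}\left[(1+p)(q+1)x+2\right]$, $t_n=n$ into Proposition \ref{prop+tri+TP}(i) and Theorem \ref{thm+Hakel+Jacbi+main}(ii)--(iii). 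Your observation that the specialization lies on the locus $a_2=a_1$, so that the rational factor $x+\tfrac{2b_0}{a_1}$ cancels against $a_1(n+1)$ and $a_1(n+2)$ to leave honest polynomials with nonnegative coefficients in $(x,p,q)$, is a correct spelling-out of a point the paper leaves implicit.
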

\end{ex}

\begin{ex}[\textbf{Peak Statistics}]
Let $pk(\pi)$ and $lpk(\pi)$ denote the numbers of interior peaks
and the numbers of left peaks of $\pi\in S_n$, respectively. Let
\begin{eqnarray*}
W_{n}(q)&=&\sum_{\pi\in S_n}q^{pk(\pi)}=\sum_{k\geq0}W_{n,k}q^{k},\\
\widetilde{W}_{n}(q)&=&\sum_{\pi\in
S_n}q^{lpk(\pi)}=\sum_{k\geq0}\widetilde{W}_{n,k}q^{k}.
\end{eqnarray*}
It is known that
\begin{eqnarray*}
W_{n,k}&=&(2k+2)W_{n-1,k}+(n-2k)W_{n-1,k-1},\\
\widetilde{W}_{n,k}&=&(2k+1)\widetilde{W}_{n-1,k}+(n-2k+1)\widetilde{W}_{n-1,k-1},
\end{eqnarray*}
where  $W_{1,0}=1$ and $\widetilde{W}_{0,0}=1$, see Stembridge
\cite{St97}, Petersen \cite{Pet07} and \cite[A008303, A008971]{Slo}
for instance. Then by (\ref{CF++++}), we have
\begin{eqnarray}
\sum\limits_{n=0}^{\infty}W_{n+1}(q)
t^n&=&\J[2(n+1),(n+2)(n+1)q;t]_{n\geq0},\\
\sum\limits_{n=0}^{\infty}\widetilde{W}_{n}(q)
t^n&=&\J[(2n+1),(n+1)^2q;t]_{n\geq0}.
\end{eqnarray}
It is easy to check that their $q$-log-convexity does not hold.
Thus, both $(W_{n+1}(q))_{n\geq0}$ and
$(\widetilde{W}_{n}(q))_{n\geq0}$ are not $q$-SM sequences. But
their reciprocal polynomials $W^*_{n+1}(q)$ and
$\widetilde{W}^*_{n}(q)$ have the following property by (vi) in
Theorem \ref{thm+Ring+PSE+SM}.
\begin{prop}
Both $(W^*_{n+1}(q+1))_{n\geq0}$ and
$(\widetilde{W}^*_{n}(q+1))_{n\geq0}$ are $q$-SM and $3$-$q$-LCX
sequences.
\end{prop}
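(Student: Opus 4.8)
The plan is to recognize both peak polynomials as concrete instances of the family $S_n(q)$ that appears inside the proof of Theorem~\ref{thm+Ring+PSE+SM}(vi), and then to import that argument wholesale after the correct reciprocal-and-shift identification. First I would match the recurrences. Setting $U_n(q):=W_{n+1}(q)$ and shifting the index in $W_{n,k}=(2k+2)W_{n-1,k}+(n-2k)W_{n-1,k-1}$, the coefficient array of $U_n$ obeys $U_{n,k}=(2k+2)U_{n-1,k}+(n-2k+1)U_{n-1,k-1}$ with $U_{0,0}=W_{1,0}=1$, which is exactly the array $[S_{n,k}]$ of (\ref{S+recurrence relation}) with $(a_1,a_2,b_0)=(2,2,1)$. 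Likewise $\widetilde{W}_{n,k}=(2k+1)\widetilde{W}_{n-1,k}+(n-2k+1)\widetilde{W}_{n-1,k-1}$ is $[S_{n,k}]$ with $(a_1,a_2,b_0)=(2,1,1)$. Hence $W_{n+1}(q)=S_n(q)$ and $\widetilde{W}_n(q)=S_n(q)$ for these two parameter choices, so the reciprocal polynomials satisfy $W^*_{n+1}(q)=S^*_n(q)$ and $\widetilde{W}^*_n(q)=S^*_n(q)$, where $S^*_n$ denotes the reciprocal polynomial of $S_n$ as in the proof of (vi). (These matchings are consistent with the continued fractions $\sum W_{n+1}(q)t^n=\J[2(n+1),(n+2)(n+1)q;t]_{n\ge0}$ and $\sum\widetilde{W}_n(q)t^n=\J[2n+1,(n+1)^2q;t]_{n\ge0}$ already recorded, via (\ref{CF++++}).)

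Next I would invoke the shift identity used in the proof of Theorem~\ref{thm+Ring+PSE+SM}(vi). There the substitution $S^*_n(q+\tfrac{2b_0}{a_1})=T_n(q)$ converts $[S^*_{n,k}]$ into the triangle $[T_{n,k}]$ of part~(vi), and Theorem~(vi) asserts that $(T_n(q))_{n\ge0}$ is $\textbf{x}$-SM and $3$-$\textbf{x}$-LCX with $\textbf{x}=(a_1,a_2,b_0,q)$. Since $\tfrac{2b_0}{a_1}=\tfrac{2\cdot1}{2}=1$ for both parameter choices, the shift is precisely by $1$: $W^*_{n+1}(q+1)=S^*_n(q+1)=T_n(q)$ with $(a_1,a_2,b_0)=(2,2,1)$, and $\widetilde{W}^*_n(q+1)=T_n(q)$ with $(a_1,a_2,b_0)=(2,1,1)$. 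Specializing the parameters to these nonnegative values in the conclusion of Theorem~(vi) then yields that both $(W^*_{n+1}(q+1))_{n\ge0}$ and $(\widetilde{W}^*_n(q+1))_{n\ge0}$ are $q$-SM and $3$-$q$-LCX, which is the assertion.

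The content here is bookkeeping rather than analysis, so the only places that can go wrong are the indexing steps: (a) the off-by-one when passing from $W_n$ to $W_{n+1}=U_n$ and from the $(n-2k)$ coefficient to the $(n-2k+1)$ coefficient of the $S$-array; and (b) confirming that the shift constant $\tfrac{2b_0}{a_1}$ equals $1$ in both cases, so that the shift $q\mapsto q+1$ named in the statement is exactly the one realizing $S^*_n$ as a (vi)-type row-generating function. I expect (b) to be the decisive point, since it is precisely this value of the shift that rescues the reciprocal polynomials—recall $(W_{n+1}(q))_{n\ge0}$ and $(\widetilde{W}_n(q))_{n\ge0}$ themselves fail to be $q$-log-convex. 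Once these identifications are pinned down, no new estimate is needed: the $q$-Stieltjes moment and $3$-$q$-log-convexity properties are inherited directly from the tridiagonal-total-positivity/continued-fraction argument already carried out for Theorem~\ref{thm+Ring+PSE+SM}(vi) through Theorems~\ref{thm+Hakel+Jacbi+main} and \ref{thm+S+C}.
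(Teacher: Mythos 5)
Your proposal is correct and is essentially the paper's own proof: the paper justifies this proposition solely by the phrase ``by (vi) in Theorem \ref{thm+Ring+PSE+SM}'', and your recurrence matching $W_{n+1}(q)=S_n(q)$ with $(a_1,a_2,b_0)=(2,2,1)$ and $\widetilde{W}_n(q)=S_n(q)$ with $(2,1,1)$, followed by the reciprocal-and-shift identification $S^*_n\bigl(q+\tfrac{2b_0}{a_1}\bigr)=T_n(q)$ with $\tfrac{2b_0}{a_1}=1$, is exactly the bookkeeping that citation suppresses. The only point worth noting is the mild ambiguity in which degree defines the reciprocal of $W_{n+1}$ (degree $n+1$ in the $W$-triangle versus degree $n$ in the shifted array); the two readings differ only by a factor $(q+1)$ in $W^*_{n+1}(q+1)$, which changes Hankel minors of order $k$ by $(q+1)^k$ and $\mathcal{L}^m$ by $(q+1)^{2^m}$, so neither the $q$-SM nor the $3$-$q$-LCX conclusion is affected.
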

\end{ex}

\begin{rem}
If we change the $\mathbb{R}[x]$ to an ordered commutative ring,
then we can extend results of this paper to those in the commutative
ring, see \cite{Zhu2018}.
\end{rem}
\section{Acknowledgments}
The author is extremely grateful to the anonymous referee for
his/her many valuable remarks and suggestions to improve the
original manuscript.

%\addcontentsline{toc}{section}{Bibliography}

\end{document}